\def\blfootnote{\gdef\@thefnmark{}\@footnotetext}
\theoremstyle{plain}
\newtheorem*{theorem*}{Theorem}
\newtheorem{theorem}{Theorem}[section]
\newtheorem{lemma}[theorem]{Lemma}
\theoremstyle{remark}
\theoremstyle{Acknowledgments}
\newtheorem*{main}{Main Theorem}
\theoremstyle{definition}
\def\mod{{\rm Mod}}
\begin{document}
\blfootnote{\textup{2000} \textit{Mathematics Subject Classification}:
57N05, 20F38, 20F05}
\blfootnote{\textit{Keywords}:
Mapping class groups, nonorientable surfaces, twist subgroup, involutions}
\newenvironment{prooff}{\medskip \par \noindent {\it Proof}\ }{\hfill
$\square$ \medskip \par}
    \def\sqr#1#2{{\vcenter{\hrule height.#2pt
        \hbox{\vrule width.#2pt height#1pt \kern#1pt
            \vrule width.#2pt}\hrule height.#2pt}}}
    \def\square{\mathchoice\sqr67\sqr67\sqr{2.1}6\sqr{1.5}6}
\def\pf#1{\medskip \par \noindent {\it #1.}\ }
\def\endpf{\hfill $\square$ \medskip \par}
\def\demo#1{\medskip \par \noindent {\it #1.}\ }
\def\enddemo{\medskip \par}
\def\qed{~\hfill$\square$}

 \title[Generating the Twist Subgroup by Involutions] {Generating the Twist Subgroup by Involutions}

\author[T{\"{u}}l\.{i}n Altun{\"{o}}z,       Mehmetc\.{i}k Pamuk, and O\u{g}uz Y{\i}ld{\i}z ]{T{\"{u}}l\.{i}n Altun{\"{o}}z,    Mehmetc\.{i}k Pamuk, and O\u{g}uz Y{\i}ld{\i}z}

\address{Department of Mathematics, Middle East Technical University,
 Ankara, Turkey}
\email{atulin@metu.edu.tr}  \email{mpamuk@metu.edu.tr} \email{oguzyildiz16@gmail.com}


\begin{abstract}
For a nonorientable surface, the twist subgroup is an index $2$ subgroup of the mapping class group.  It is generated by Dehn twists about two-sided simple closed curves.
In this paper, we study involution generators of the twist subgroup.  We give generating sets of involutions with the smallest number of elements our methods allow.
\end{abstract}

 \maketitle
  \setcounter{secnumdepth}{2}
 \setcounter{section}{0}

\section{Introduction}

Let $N_g$ denote a closed connected nonorientable surface of genus $g$. The mapping class group of $N_g$ is defined to be the group of the isotopy classes of all diffeomorphisms
of $N_g$.  Throughout the paper this group will be denoted by $\mod(N_g)$.  Let $\Sigma_g$ denote a closed connected orientable surface of genus $g$. The mapping class group of 
$\Sigma_g$ is the group of the isotopy classes of orientation preserving diffeomorphisms and is denoted by $\mod(\Sigma_g)$.

In the orientable case, it is a classical result that $\mod(\Sigma_g)$ is generated by finitely many Dehn twists about nonseparating simple closed curves~\cite{de,H,l3}.  
The study of algebraic properties of mapping class group, finding small generating sets, generating sets with particular properties, is an active one leading to interesting developments. 
Wajnryb~\cite{w} showed that $\mod(\Sigma_g)$ can be generated by two elements given as a product of Dehn twists.  As the group is not abelian, this is the smallest 
possible.  Later, Korkmaz~\cite{mk2} showed that one of these generators can be taken as a Dehn twist, he also proved that $\mod(\Sigma_g)$ can be generated by two torsion 
elements. Recently, the third author showed that $\mod(\Sigma_g)$ is generated by two torsions of small orders~\cite{y1}.

Generating  $\mod(\Sigma_g)$ by involutions was first considered by McCarthy and Papadopoulus~\cite{mp}.  They showed that the group can be generated 
by infinitely many conjugates of a single involution (element of order two) for $g\geq 3$.     
In terms of generating by finitely many involutions, Luo~\cite{luo} showed that any Dehn twist about a nonseparating simple closed curve 
can be written as a product six involutions, which in turn implies that $\mod(\Sigma_g)$ can be generated by $12g+6$ involutions.  
Brendle and Farb~\cite{bf} obtained a generating set of six involutions for $g\geq3$. Following their work, Kassabov~\cite{ka} showed that 
$\mod(\Sigma_g)$ can be generated by four involutions if $g\geq7$.  Recently, Korkmaz~\cite{mk1} showed that $\mod(\Sigma_g)$ is generated by three involutions 
if $g\geq8$ and four involutions if $g\geq3$. Also, the third author improved his result showing that it is generated by three involutions if $g\geq6$~\cite{y2}.

Compared to orientable surfaces less is known about $\mod(N_g)$.  Lickorish~\cite{l1,l2} showed that it is generated by Dehn twists about two-sided simple
closed curves and a so-called $Y$-homeomorphism (or a crosscap slide). Chillingworth~\cite{c} gave a finite generating set for $\mod(N_g)$ that linearly depends on $g$. 
Szepietowski~\cite{sz2} proved that  $\mod(N_g)$ is generated by three elements and by four involutions.

The twist subgroup $\mathcal{T}_g$ of $\mod(N_g)$ is the group generated by Dehn twists about two-sided simple closed curves.
The group $\mathcal{T}_g$ is a subgroup of index $2$ in $\mod(N_g)$ ~\cite{l2}. 
Chillingworth~\cite{c} showed that $\mathcal{T}_g$ can be generated by finitely many Dehn twists. Stukow~\cite{st2} obtained a finite presentation for $\mathcal{T}_g$ with 
$(g+2)$ Dehn twist generators. Later  Omori~\cite{om} reduced the number of Dehn twist generators to $(g+1)$ for $g\geq4$. If it is not required that all generators are Dehn twists, 
Du~\cite{du} obtained a generating set consisting of three elements, two involutions and an element of order $2g$ whenever $g\geq5$ and odd. Recently, Yoshihara~\cite{yo} was interested in the problem of finding generating sets for $\mathcal{T}_g$ consisting of only involutions.
He proved that $\mathcal{T}_g$ can be generated by six involutions for $g\geq14$ and by eight involutions if $g\geq8$.

Our aim in this paper is to generate $\mathcal{T}_g$ with fewer number of involutions. It is known that any group generated by two involutions is isomorphic to a quotient of a dihedral group. Hence, $\mathcal{T}_g$ cannot be generated by two involutions. We are not sure whether $\mathcal{T}_g$ can be generated by three involutions. Based on the approach of ~\cite{mk1}, we obtain the following result:
\begin{main}\label{t0}
The twist subgroup $\mathcal{T}_g$ of $\mod(N_g)$
is generated by 
\begin{enumerate}
\item[(1)] four involutions if  $g\geq12$ and even, 
\item[(2)] four involutions if $g=4k+1\geq 5$,
\item[(3)] five involutions if $g=4k+3\geq 11$.
\end{enumerate}
We also prove that the twist subgroup $\mathcal{T}_g$ can be generated by 
\begin{enumerate}
\item[(4)] five involutions if  $g=8,10$,
\item[(5)] six involutions if $g=6,7$.
\end{enumerate}
\end{main}
Note that if a group is generated by involutions, then its first integral homology group should consist of elements of order $2$. For the twist subgroup $\mathcal{T}_g$, this is the case when $g\geq5$~\cite{st1}.

The paper is organized as follows. In Section~\ref{S2}, we recall some basic results on $\mod(N_g)$ and its subgroup $\mathcal{T}_g$.  We work with nonorientable surfaces of even genus in Section ~\ref{S3} and nonorientable surfaces of odd genus in Section ~\ref{S4}.
\medskip

\noindent
\textit{Acknowledgments.} The authors thank Mustafa Korkmaz for various fruitful discussions. The first author was partially supported by the Scientific and 
Technologic Research Council of Turkey (TUBITAK)[grant number 117F015].


\par  
\section{Background and Results on Mapping Class Groups} \label{S2}
 Let $N_g$ be a closed connected nonorientable surface of genus $g$. 
 Note that the {\textit{genus}} for a nonorientable surface is the number 
 of projective planes in a connected sum decomposition. The {\textit{mapping class group}} 
 $\mod(N_g)$ of the surface $N_g$ is defined to be the group of the isotopy classes of 
 diffeomorphisms $N_g \to N_g$. Throughout the paper we do not distinguish a 
 diffeomorphism from its isotopy class. For the composition of two diffeomorphisms, we
use the functional notation; if $g$ and $h$ are two diffeomorphisms, 
the composition $gh$ means that $h$ acts on $N_g$ first.\\
\indent
A simple closed curve on a nonorientable surface $N_g$ is said to be 
\textit{one-sided} if a regular neighbourhood of it is homeomorphic to 
a M\"{o}bius band. It is called \textit{two-sided} if a regular neighbourhood of it is homeomorphic to an annulus. If $a$ is a two-sided simple closed 
curve on $N_g$, to define the Dehn twist $t_a$, we need to fix one of two possible 
orientations on a regular neighbourhood of $a$ (as we did for the 
curve $a_1$ in Figure~\ref{G}). Following ~\cite{mk1} the right-handed 
Dehn twist $t_a$ about $a$ will be denoted by the corresponding capital 
letter $A$.

Recall the following properties of Dehn twists: let $a$ and $b$ be 
two-sided simple closed curves on $N_g$ and let $f\in \mod(N_g)$.
\begin{itemize}
\item \textbf{Commutativity:} If $a$ and $b$ are disjoint, then $AB=BA$.
\item \textbf{Conjugation:} If $f(a)=b$, then $fAf^{-1}=B^{s}$, where $s=\pm 1$ 
depending on whether $f$ is orientation preserving or orientation reversing on a 
neighbourhood of $a$ with respect to the chosen orientation.
\end{itemize}

\begin{figure}[h]
\begin{center}
\scalebox{0.3}{\includegraphics{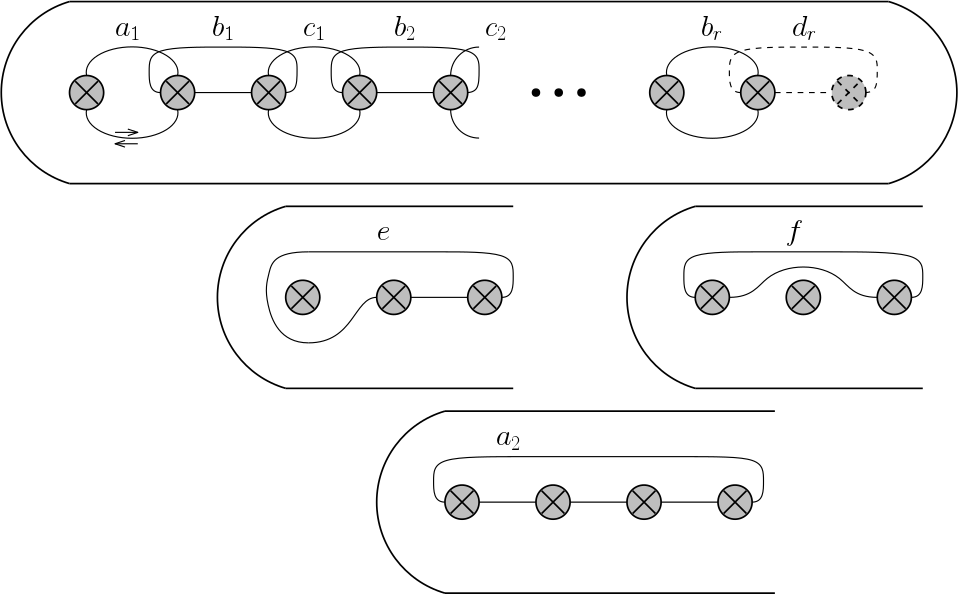}}
\caption{The curves $a_1,a_2,b_i,c_i,e$ and $f$ on the surface $N_g$.}
\label{G}
\end{center}
\end{figure}
\par
\begin{figure}[h]
\begin{center}
\scalebox{0.26}{\includegraphics{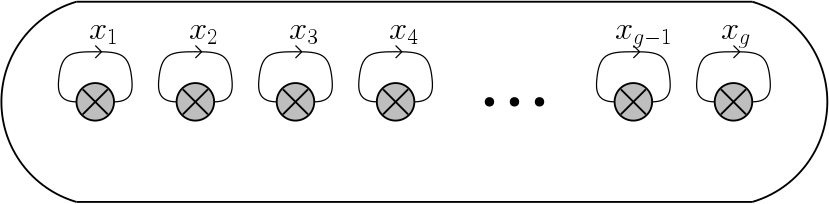}}
\caption{Generators of $H_1(N_g;\mathbb{R})$.}
\label{H}
\end{center}
\end{figure}

Consider the surface $N_g$ shown in Figure~\ref{G}.
The Dehn twist generators of Omori can be given as follows (note that we do not have the curve $d_r$ when $g$ is odd).
\begin{theorem}\cite{om}\label{thm1}
The twist subgroup $\mathcal{T}_g$ is generated by the following $(g+1)$ Dehn twists
\begin{enumerate}
\item $A_1,A_2,B_1,\ldots, B_r$, $C_1,\ldots, C_{r-1}$ and $E$ if $g=2r+1$ and 
\item $A_1,A_2,B_1,\ldots, B_r$, $C_1,\ldots, C_{r-1}$, $D_r$ and $E$ if $g=2r+2$ .
\end{enumerate}
\end{theorem}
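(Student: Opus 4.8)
The plan is to begin from an established finite generating set of $\mathcal{T}_g$ by Dehn twists and to show that the twist about the remaining curve $f$ in Figure~\ref{G} is redundant, so that the listed $(g+1)$ twists already generate the whole subgroup. Concretely, I would take Stukow's $(g+2)$-element Dehn twist generating set~\cite{st2}, which one arranges to consist of the twists in our list together with the twist $F$ about $f$, and reduce it by one. Since $\mathcal{T}_g$ is already known to be generated by these $(g+2)$ twists, it suffices to express $F$ as a word in $A_1, A_2, B_1, \ldots, B_r, C_1, \ldots, C_{r-1}, E$ (together with $D_r$ when $g$ is even).

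The central tool is the lantern relation. The idea is to locate an embedded four-holed sphere in $N_g$ whose three interior curves and four boundary curves are isotopic to curves appearing in (or carried by conjugates of elements of) our generating list, with $f$ among the seven. Writing the lantern relation for this configuration in the form $t_{a}t_{b}t_{c}t_{d} = t_{x}t_{y}t_{z}$ and solving for the single twist corresponding to $F$ exhibits $F$ as a product of twists lying in the subgroup generated by the listed curves. This is precisely the step that removes one generator and constitutes Omori's improvement over Stukow.

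Before invoking the lantern relation I would first establish that the chain $A_1, B_1, C_1, B_2, \ldots, B_r$, together with the crosscap-adjacent twists $A_2$ and $E$ (and $D_r$ when $g$ is even), already generates all the twists supported on the relevant orientable subsurface, using the standard braid relation $t_a t_b t_a = t_b t_a t_b$ for curves meeting once and commutativity for disjoint curves. Having all of these twists and their conjugates available, one can move the auxiliary curves of the chosen lantern into the desired position and then apply the relation. The odd case $g=2r+1$ and the even case $g=2r+2$ are handled by essentially the same argument; the only difference is the presence of the extra curve $d_r$, and hence the twist $D_r$, when $g$ is even, which enlarges the list by one and slightly changes which lantern is used.

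The main obstacle I expect is twofold. First, one must exhibit the correct lantern, that is, choose the seven curves so that six of them lie in the subgroup generated by our list while the seventh is $f$; finding such a configuration sitting compatibly with the crosscaps requires care, and in the nonorientable setting one cannot freely slide curves past a Möbius band. Second, because $N_g$ is nonorientable, the conjugation rule for Dehn twists carries a sign depending on whether the conjugating diffeomorphism preserves or reverses the chosen local orientation near the curve; one must track these signs throughout so that the resulting expression for $F$ is a genuine word in positive and negative powers of the listed generators. Once the lantern is identified and the sign bookkeeping is settled, the conclusion that the $(g+1)$ twists generate $\mathcal{T}_g$ follows at once.
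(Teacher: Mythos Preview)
The paper does not prove this theorem; it is stated as a cited result from Omori~\cite{om} and used as a black box throughout. There is therefore no proof in the paper to compare your proposal against.

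That said, your outline is essentially the strategy of Omori's original argument: start from Stukow's $(g+2)$-generator presentation and eliminate one generator via a lantern relation involving the curve $f$. Your identification of the two delicate points (locating the right four-holed sphere near the crosscaps, and tracking orientation signs under conjugation) is accurate. If you want to complete this into an actual proof you would need to exhibit the specific lantern configuration, which in Omori's paper is done explicitly with a picture; without that concrete choice of curves your proposal remains a plan rather than a proof.
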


Consider a basis $\lbrace x_1, x_2. \ldots, x_{g-1}\rbrace$ for $H_1(N_g; \mathbb{R})$ 
such that the curves $x_i$ are one-sided and disjoint as in Figure~\ref{H}. It is known that every
 diffeomorphism $f: N_g \to N_g$ induces a linear map 
 $f_{\ast}: H_1(N_g;\mathbb{R}) \to H_1(N_g;\mathbb{R})$. Therefore, one can
  define a homomorphism $D: \mod(N_g) \to \mathbb{Z}_{2}$ by $D(f)=\textrm{det}(f_{\ast})$. 
  The following lemma from~\cite{l1} tells when a mapping class falls into the twist subgroup $\mathcal{T}_g$.

\begin{lemma}\label{lem1} Let $f\in  \mod(N_g)$. Then  $D(f)=1$ if $f\in \mathcal{T}_g$ and
$D(f)=-1$ if $f \not \in \mathcal{T}_g$.
\end{lemma}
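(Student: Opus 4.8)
The plan is to prove the lemma by identifying $\ker D$ with the twist subgroup $\mathcal{T}_g$ via an index count. Since $D$ takes values in $\Z_2=\{\pm1\}$, its kernel has index $1$ or $2$ in $\mod(N_g)$. As we are given from~\cite{l2} that $\mathcal{T}_g$ has index $2$, it suffices to establish two things: first, that $\mathcal{T}_g\subseteq\ker D$, i.e. $D(f)=1$ for every $f\in\mathcal{T}_g$; and second, that $D$ is nontrivial, i.e. $D(f)=-1$ for at least one mapping class $f$. Once these are in place, the inclusion of the index-$2$ subgroup $\mathcal{T}_g$ into the proper subgroup $\ker D$ forces $\mathcal{T}_g=\ker D$, and the two cases $D(f)=\pm1$ of the lemma follow at once.

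For the first point, since $\mathcal{T}_g$ is generated by Dehn twists about two-sided simple closed curves and $D$ is a homomorphism, it is enough to show $D(t_a)=\det\big((t_a)_\ast\big)=1$ for every two-sided simple closed curve $a$. Here I would use the standard homological formula for a Dehn twist: because $a$ is two-sided, its normal bundle is trivial, so $a$ carries a well-defined dual class $\alpha_a\in H^1(N_g;\R)$, and $\hat\imath(x,a):=\langle\alpha_a,x\rangle$ defines a linear functional on $H_1(N_g;\R)$ independent of the representative of $x$. With respect to the chosen orientation near $a$, the twist acts by the transvection
\[
(t_a)_\ast(x)=x+\hat\imath(x,a)\,[a],\qquad x\in H_1(N_g;\R).
\]
Writing this as $(t_a)_\ast=I+[a]\,\alpha_a^{\mathsf T}$, the matrix determinant lemma gives $\det\big((t_a)_\ast\big)=1+\langle\alpha_a,[a]\rangle=1+\hat\imath(a,a)$. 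Since $a$ is two-sided it can be pushed off itself within its annular neighbourhood, so its algebraic self-intersection $\hat\imath(a,a)$ vanishes and $D(t_a)=1$. (If $[a]=0$ in $H_1(N_g;\R)$ the map is already the identity and the conclusion is immediate.)

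For the second point I would exhibit one class with $D=-1$, for which a crosscap slide (Y-homeomorphism) $y$ is the natural candidate: by Lickorish's generating result~\cite{l1,l2}, $\mod(N_g)$ is generated by Dehn twists together with $y$, so $y\notin\mathcal{T}_g$. Computing its action on the basis $\{x_1,\dots,x_{g-1}\}$ of one-sided curves, the crosscap slide is orientation-reversing on the normal neighbourhood of the crosscap it carries, reversing the sign of the generator carried by that crosscap and fixing the others, so that $\det(y_\ast)=-1$, i.e. $D(y)=-1$. This shows $\ker D\neq\mod(N_g)$ and completes the index argument.

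The main obstacle is the first point: making precise that a Dehn twist acts as a genuine transvection on $H_1(N_g;\R)$ in the absence of a global orientation, and in particular that the intersection functional $\hat\imath(\cdot,a)$ is well defined on homology classes. This is exactly where two-sidedness of $a$ is essential, since it supplies the orientable normal neighbourhood needed both to define $\alpha_a$ and to realize $\hat\imath(a,a)=0$; the determinant computation and the index count are then routine.
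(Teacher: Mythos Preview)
The paper does not actually prove this lemma: it simply attributes it to Lickorish~\cite{l1} and moves on. So there is no ``paper's proof'' to compare your argument against; you are supplying a proof where the paper is content to cite one.

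Your strategy is sound and is essentially the standard one. Showing that every two--sided Dehn twist acts as a transvection on $H_1(N_g;\mathbb{R})$ via the Poincar\'e dual $\alpha_a$ (which exists precisely because the normal bundle of $a$ is trivial) is correct, and the matrix determinant lemma together with $\langle\alpha_a,[a]\rangle=0$ gives $D(t_a)=1$. The index argument then reduces everything to exhibiting a single class with $D=-1$.

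There is one inaccuracy worth flagging. Your description of the crosscap slide $y=Y_{m,a}$ as ``reversing the sign of the generator carried by that crosscap and fixing the others'' is not quite right: the map is supported on a Klein--bottle--minus--disk containing \emph{two} of the basis curves, say $x_1$ and $x_2$, and on this span $y_\ast$ is triangular of the form $x_1\mapsto -x_1,\ x_2\mapsto x_2+2x_1$ (or the analogous shear with the roles of $x_1,x_2$ swapped, depending on conventions). So $x_2$ is not fixed. Fortunately this does not damage the argument, since the matrix is still triangular with diagonal $(-1,1,\dots,1)$ and hence $\det(y_\ast)=-1$. A cleaner choice for the witness would be a crosscap transposition $U_{12}$, which transparently swaps $x_1\leftrightarrow x_2$ and fixes the rest, giving $\det(U_{12})_\ast=-1$ with no computation; the crosscap slide differs from $U_{12}$ by a Dehn twist, so either works.
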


\section{The even case}\label{S3}
\begin{figure}[h]
\begin{center}
\scalebox{0.27}{\includegraphics{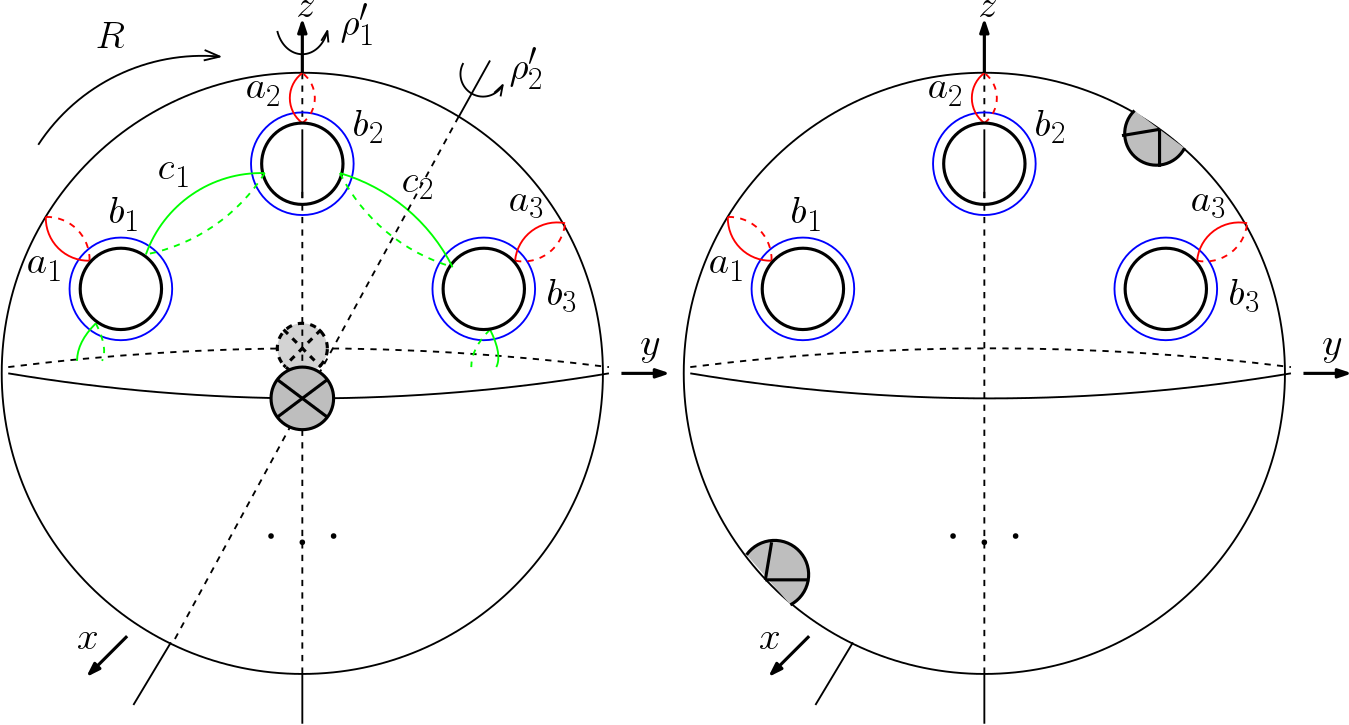}}
\caption{The models for $N_g$ if $g=2r+2$.}
\label{C}
\end{center}
\end{figure}

For $g=2r+2$, we work with the models in Figure~\ref{C}. This surface is obtained
 from a genus $r$ orientable surface by deleting the interiors of two disjoint disks 
and identifying the antipodal points on the boundary. Moreover, the genus $r$ 
surface minus two disks is embedded in $\mathbb{R}^{3}$ in such a way that each genus 
is in a circular position with the second genus on the $+z$-axis and the rotation $R$ by
$\frac{2\pi}{r}$ about $x$-axis maps the curve $b_i$ to $b_{i+1}$ for
$i=1,\ldots,r-1$ and $b_r$ to $b_1$.\par
We use the explicit homeomorphism constructed in  ~\cite[Section $3$]{st1} to identify the models in Figure~\ref{G} and~\ref{C}. On the left hand side of Figure~\ref{C},
one of the crosscaps is centered on the $+x$-axis and the other one is obtained by 
rotating the first one by $\pi$ about the $z$-axis.  The model on the right hand side is obtained 
from the model on the left hand side by sliding crosscaps via a diffeomorphism, say $\phi$.

Let $\tau$ be the blackboard reflection of $N_g$ for the model in 
the left hand side in Figure~\ref{C}.  If $r$ is odd, we consider the reflection 
$\tau$ and if $r$ is even, we consider the reflection $\phi\tau\phi^{-1}$. 
The surface $N_g$ is invariant under the reflections $\tau$ and $\phi\tau\phi^{-1}$. 
Abusing the notation, we keep writing $\tau$ instead of $\phi\tau\phi^{-1}$. Note that $D(\tau)=-1$.\\
\indent
Note that the surface $N_g$ is invariant under the two rotations $\rho_{1}^{\prime}$ and $\rho_{2}^{\prime}$ 
where $\rho_{1}^{\prime}$ is the rotation by $\pi$ about $z$-axis and $\rho_{2}^{\prime}$ is the rotation by $\pi$ 
about the line $z=tan(\frac{\pi}{r})y$, $x=0$ as in Figure~\ref{C}. The rotations $\rho_{1}^{\prime}$ and $\rho_{2}^{\prime}$ 
satisfy $D(\rho_{1}^{\prime})=D(\rho_{2}^{\prime})=-1$, which implies that the twist subgroup $\mathcal{T}_g$ does not 
contain $\rho_{1}^{\prime}$ and $\rho_{2}^{\prime}$. 
Let $\rho_1=\rho_{1}^{\prime}\tau$ and $\rho_2=\rho_{2}^{\prime}\tau$. Then the involutions $\rho_{1}$ and $\rho_{2}$ are contained 
in $\mathcal{T}_g$ by Lemma~\ref{lem1}. Observe that the rotation $R=\rho_2\rho_1$.
 
\subsection{Generating sets for the twist subgroup  $\mathcal{T}_g$} 
Recently, Korkmaz~\cite{mk1} introduced new generating sets for the mapping class group 
of an orientable surface. We follow the outline of his proofs. Especially, since the curves 
$a_i$, $b_i$ and $c_i$ are exactly the same as in ~\cite{mk1}, statements about these 
curves follows directly from ~\cite{mk1}. Before we state our result, let us recall the 
above mentioned theorem of Korkmaz. Recall that $A_i$, $B_i$, $C_i$, $E$
and $F$ represent the Dehn twists about the corresponding lower case letters in Figure~\ref{G} and ~\ref{C}.

\begin{theorem}\cite{mk1}\label{mt1}
Let $\Sigma_g$ denote a closed connected oriented surface of genus $g$. Then, 
if $g\geq3$, $\mod(\Sigma_g)$ is generated by the four elements $R,
A_1A_{2}^{-1}, B_1B_{2}^{-1}$ and $C_1C_{2}^{-1}$.
\end{theorem}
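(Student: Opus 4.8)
The plan is to show that the subgroup $G=\langle R,\,A_1A_2^{-1},\,B_1B_2^{-1},\,C_1C_2^{-1}\rangle$ is all of $\mod(\Sigma_g)$ by proving that $G$ contains a standard (Humphries/Lickorish-type) generating set of Dehn twists. Since such a set consists, up to the action of $R$, of twists about the curves $a_1,b_1,c_1$ and their translates, it suffices to exhibit every twist $A_i$, $B_i$, $C_i$ inside $G$. Throughout I would use two facts: $R$ is orientation preserving, so the conjugation formula reads $RT_\gamma R^{-1}=T_{R(\gamma)}$ with exponent $+1$; and $R$ cyclically permutes the curves inside each of the three families, i.e. $R(a_i)=a_{i+1}$, $R(b_i)=b_{i+1}$, $R(c_i)=c_{i+1}$ with indices read cyclically, as is visible from Figures~\ref{G} and~\ref{C}.

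First I would generate all ``within-family differences.'' Conjugating the three given elements by powers of $R$ and applying the conjugation formula gives
$$R^{\,k}\big(X_1X_2^{-1}\big)R^{-k}=X_{1+k}X_{2+k}^{-1}\qquad(X\in\{A,B,C\}),$$
so that $G$ contains $A_iA_{i+1}^{-1}$, $B_iB_{i+1}^{-1}$ and $C_iC_{i+1}^{-1}$ for every $i$. These are genuine single elements because the curves within one family are pairwise disjoint and the corresponding twists commute; this commutativity will be used repeatedly below.

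The crux is to manufacture one honest Dehn twist inside $G$ and, at the same time, to bridge the three families, since the differences together with $R$ keep the families separate and hence some inhomogeneous relation is unavoidable. For this I would invoke a lantern relation. Because $g\geq 3$, there is a four-holed sphere embedded in $\Sigma_g$ whose four boundary curves and three interior curves are taken from the families $\{a_i\},\{b_i\},\{c_i\}$ and their $R$-translates; the lantern relation then equates a product of seven twists with the identity. Choosing the embedding so that six of the seven factors group into consecutive-difference elements already known to lie in $G$ forces the remaining single twist $T_\gamma$ into $G$ as well. Because a lantern of this kind necessarily mixes curves from different families, the same relation supplies the passage between families.

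Once one twist, say $A_1$, lies in $G$, the rest is a formal bootstrap. From $A_1\in G$, $A_1A_2^{-1}\in G$, and the commutativity of disjoint twists, one gets $A_2=A_1\big(A_1A_2^{-1}\big)^{-1}\in G$, and inductively every $A_i\in G$; the bridging step yields one twist in each of the $B$- and $C$-families, whence every $B_i$ and $C_i$ lies in $G$ by the same argument. This produces a complete generating set of twists, so $G=\mod(\Sigma_g)$. The main obstacle I anticipate is precisely the crux step: one must pin down an explicit lantern in the given configuration and carry out the bookkeeping that collapses six of its factors into elements of $G$ while simultaneously linking the families — everything else follows mechanically once that single relation is in hand.
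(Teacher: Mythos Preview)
The paper does not give its own proof of this statement: Theorem~\ref{mt1} is quoted from Korkmaz~\cite{mk1} and used as a black box (the subsequent theorems only invoke ``the proof of Theorem~\ref{mt1}'' to import the conclusion that all $A_i,B_i,C_j$ lie in $G$). So there is nothing in the present paper to compare your argument against line by line; what one can do is compare your sketch to the method in~\cite{mk1}, which the paper implicitly relies on.

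Your outline is in the right spirit and does match Korkmaz's strategy: conjugate by $R$ to get all consecutive differences $X_iX_{i+1}^{-1}$, then use a lantern relation to extract a single Dehn twist and to bridge the three families, after which a bootstrap finishes. However, as you yourself flag, the entire content of the theorem lives in that ``crux step,'' and you have not carried it out. Two concrete issues would need to be resolved. First, in the lantern relation $T_{d_1}T_{d_2}T_{d_3}T_{d_4}=T_xT_yT_z$ the boundary twists commute with everything but the interior twists $T_x,T_y,T_z$ do \emph{not} commute with one another, so you cannot simply ``group six of the seven factors into consecutive-difference elements'': one needs an auxiliary curve $a$ disjoint from all seven lantern curves and $\mathcal{G}$-equivalent to each of them, so that both sides can be rewritten as (element of $G$)$\cdot T_a^4$ and (element of $G$)$\cdot T_a^3$, yielding $T_a\in G$. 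Second, you must actually exhibit such a lantern and such an $a$ in the rotationally symmetric model for every $g\geq 3$; this is where Korkmaz's equivalence-relation machinery (the $G$-invariant relation $\mathcal{G}=\{(a,b):AB^{-1}\in G\}$, alluded to in the proof of Theorem~\ref{t1}) earns its keep, propagating enough equivalences to make the lantern applicable. Until those two points are pinned down, the proposal remains a plan rather than a proof.
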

Using the above theorem, we give a generating set for $\mathcal{T}_g$ when $g$ is even.
\begin{theorem}\label{t1}
Let $r\geq3$ and $g=2r+2$. Then the twist subgroup $\mathcal{T}_g$ is generated by 
the elements $R, A_1A_{2}^{-1}, B_1B_{2}^{-1}, C_1C_{2}^{-1}, D_r$ and $E$ if $g=2r+2$.

\end{theorem}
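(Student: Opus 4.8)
The plan is to bootstrap from Korkmaz's theorem (Theorem~\ref{mt1}) applied to the genus $r$ orientable surface $\Sigma_r$ lying "underneath" the model in Figure~\ref{C}. First I would note that the subsurface of $N_g$ obtained by removing the two crosscaps is a genus $r$ orientable surface with two boundary components, and that the curves $a_i$, $b_i$, $c_i$ and $e$ all lie in this subsurface, positioned exactly as in Korkmaz's picture. Hence the subgroup of $\mathcal{T}_g$ generated by $R$, $A_1A_2^{-1}$, $B_1B_2^{-1}$, $C_1C_2^{-1}$ and $E$ already contains (the images in $\mod(N_g)$ of) a large chunk of $\mod(\Sigma_r)$ — in particular every Dehn twist $A_i$, $B_i$, $C_i$ and $E$, by rerunning Korkmaz's argument verbatim. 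The key point is that Korkmaz's proof produces each individual twist $A_i$, $B_i$, $C_i$ from the four generators $R, A_1A_2^{-1}, B_1B_2^{-1}, C_1C_2^{-1}$, and those manipulations only involve the curves $a_i,b_i,c_i$, which are identical here; adding $E$ to the generating list then gives $E$ as well.

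Next I would invoke Omori's theorem (Theorem~\ref{thm1}): for $g=2r+2$ the twist subgroup $\mathcal{T}_g$ is generated by $A_1, A_2, B_1,\ldots,B_r, C_1,\ldots,C_{r-1}, D_r$ and $E$. Comparing lists, everything on Omori's list except $D_r$ is already in the subgroup $H = \langle R, A_1A_2^{-1}, B_1B_2^{-1}, C_1C_2^{-1}, D_r, E\rangle$ by the previous paragraph, and $D_r$ is on our list by fiat. Therefore $H$ contains all of Omori's generators, so $H = \mathcal{T}_g$, which is exactly the assertion of Theorem~\ref{t1}. (One should double-check that the chosen orientations/conventions for the twists match between Figures~\ref{G} and~\ref{C} under the homeomorphism $\phi$ from~\cite{st1}, so that "$A_i$" denotes the same element in both models; this is a bookkeeping issue but worth a sentence.)

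The main obstacle is making rigorous the claim that Korkmaz's argument transfers. In~\cite{mk1} the ambient group is $\mod(\Sigma_g)$ of a \emph{closed} orientable surface, and some steps (e.g.\ the final lantern-type or chain relations, or the use of the hyperelliptic involution) may implicitly use closedness or the absence of extra crosscaps. What is actually needed is weaker: we only need that the specific twists $A_i, B_i, C_i$ can be extracted from $\{R, A_1A_2^{-1}, B_1B_2^{-1}, C_1C_2^{-1}\}$ using relations supported on a regular neighbourhood of $a_i\cup b_i\cup c_i$ (chain relations, braid relations, commutativity), all of which hold in $\mod(N_g)$ because they hold in any surface containing that configuration. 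I would therefore state this as a lemma: "the subgroup of $\mod(N_g)$ generated by $R, A_1A_2^{-1}, B_1B_2^{-1}, C_1C_2^{-1}$ contains $A_i, B_i, C_i$ for all $i$," with proof "identical to~\cite{mk1}, since only the curves $a_i,b_i,c_i$ and relations among the twists about them are used." The remaining work — conjugating one twist to another by powers of $R$, and peeling off individual twists from the products $X_1X_2^{-1}$ using a neighbouring twist — is routine and I would not grind through it.
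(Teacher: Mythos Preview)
Your proposal is correct and follows essentially the same route as the paper: invoke the proof of Korkmaz's Theorem~\ref{mt1} to conclude that the subgroup generated by $R, A_1A_2^{-1}, B_1B_2^{-1}, C_1C_2^{-1}$ already contains every $A_i, B_i, C_j$, then appeal to Omori's generating set (Theorem~\ref{thm1}) together with the remaining generators $D_r$ and $E$. The only cosmetic difference is that the paper packages the ``peeling off'' step via the equivalence relation $\mathcal{G}=\{(a,b):AB^{-1}\in G\}$ on two-sided nonseparating curves, whereas you describe the same mechanism in words; your extra caution about whether Korkmaz's manipulations transfer to the nonorientable setting is well placed but, as you anticipated, unnecessary since only local braid/commutation relations among the $a_i,b_i,c_i$ are used.
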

\begin{proof}
Let $G$ be the subgroup of $\mathcal{T}_g$  generated by the set 
\[
\lbrace R, A_1A_{2}^{-1}, B_1B_{2}^{-1}, C_1C_{2}^{-1}, D_r, E\rbrace
\]
if $g=2r+2$.

Let $\mathcal{S}$ denote the set of isotopy classes of two-sided non-separating 
simple closed curves on $N_g$. Define a subset $\mathcal{G}$ of $\mathcal{S}\times \mathcal{S}$ 
as 
\[
\mathcal{G} =\lbrace(a,b): AB^{-1}\in G \rbrace.
\]
The set $\mathcal{G}$ defines an equivalence relation on $\mathcal{S}$ which satisfies 
$G$-invariance property, that is, 
\begin{center}
if $(a,b)\in \mathcal{G}$ and $H\in G$ then $(H(a),H(b))\in \mathcal{G}$.
\end{center}
Then it follows from the proof of Theorem~\ref{mt1} that the Dehn twists 
$A_i$ and $B_i$ for $i=1,\ldots,r$ are contained in $G$. Also, $G$ contains $C_j$ 
for $j=1,\ldots,r-1$. Since all generators given in Theorem~\ref{thm1} are contained 
in the group $G$. We conclude that $G=\mathcal{T}_g$.
\end{proof}

\subsection{Involution generators}
We consider the surface $N_g$ where $g$-crosscaps are 
distributed on the sphere as in Figure~\ref{sigma}. If $g=2r+2$ and $r\geq3$, there is a 
reflection, $\sigma$, of the surface $N_g$ in the $xy$-plane such that 
\begin{itemize}
	\item $\sigma(f)=a_1$, $\sigma(b_r)=d_r$,
	\item $\sigma(x_2)=x_3$, $\sigma(x_4)=x_5$ $\sigma(x_{g-2})=x_{g}$ and
	\item $\sigma(x_i)=x_i$ if $i=6,\ldots,g-3$ or $i=1,g-1$.
\end{itemize}
with reverse orientation. (Recall that $x_i$'s are the generators of $H_1(N_g;\mathbb{R})$ 
as shown in Figure~\ref{H}.)\\
\begin{figure}[h]
\begin{center}
\scalebox{0.35}{\includegraphics{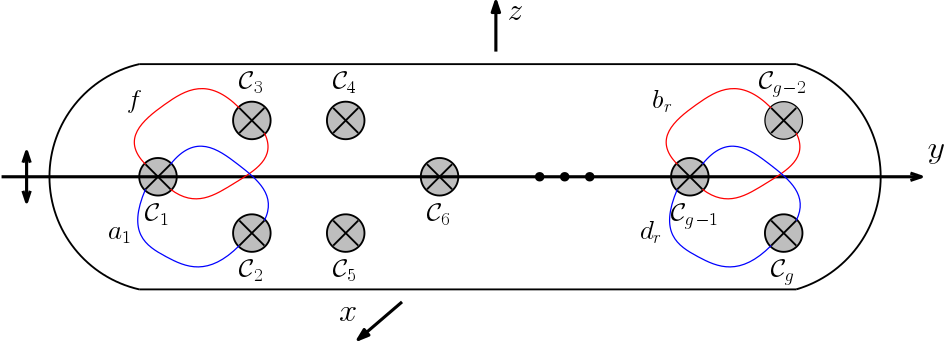}}
\caption{The involution $\sigma$ if $g=2r+2$.}
\label{sigma}
\end{center}
\end{figure}
\indent
The linear map $D$ associated to $\sigma$ satisfies $D(\sigma)=1$ if $g$ is even. 
This implies that the involution $\sigma$ is contained in $\mathcal{T}_g$ if $g$ is even. 
 
\indent
\begin{figure}[h]
\begin{center}
\scalebox{0.4}{\includegraphics{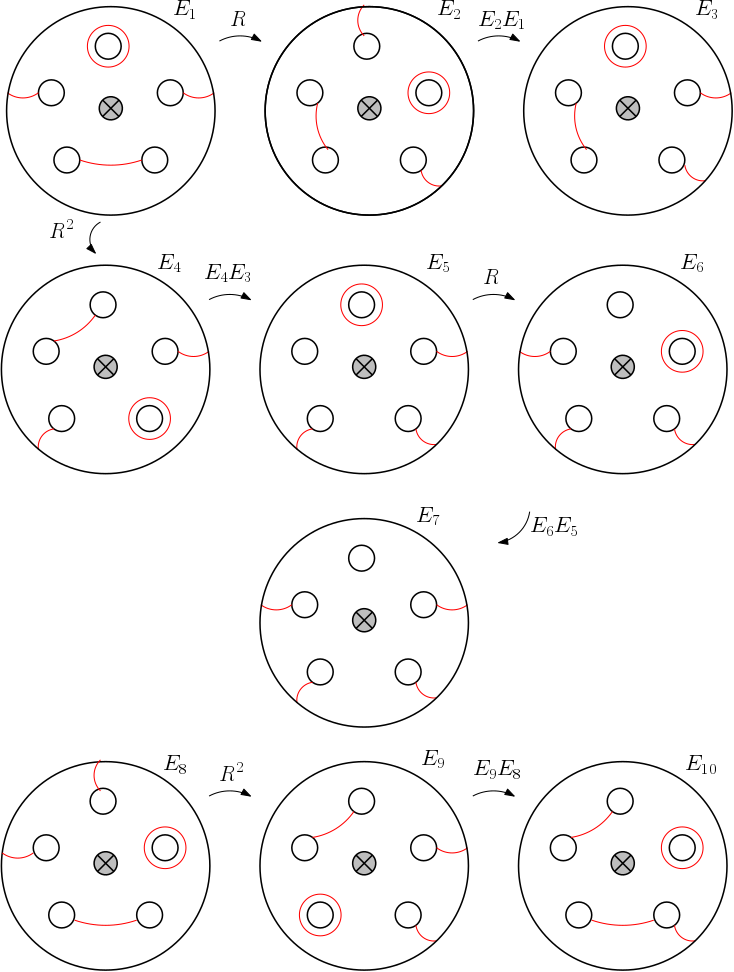}}
\caption{The proof of Theorem~\ref{g12}.}
\label{E}
\end{center}
\end{figure}
\begin{theorem}\label{g12}
The twist subgroup $\mathcal{T}_{12}$ is generated by the involutions
$\rho_1,\rho_2, \rho_1 A_1 B_2 C_4 A_3$ and $\sigma$.
\end{theorem}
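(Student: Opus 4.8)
The plan is to show that the group $G$ generated by the four involutions $\rho_1,\rho_2,\rho_1 A_1 B_2 C_4 A_3$ and $\sigma$ coincides with $\mathcal{T}_{12}$, by producing inside $G$ the generating set of Theorem~\ref{t1} (with $r=5$), namely $R$, $A_1A_2^{-1}$, $B_1B_2^{-1}$, $C_1C_2^{-1}$, $D_5$ and $E$. First I would record the easy containments: since $R=\rho_2\rho_1$, we get $R\in G$ immediately. Next, conjugating the third generator by the (known) involution $\rho_1$ yields $A_1B_2C_4A_3\in G$; call this element $W$. The curves $a_1,b_2,c_4,a_3$ are pairwise disjoint (this is exactly the configuration used in~\cite{mk1}), so $W=A_1B_2C_4A_3$ is a product of four commuting Dehn twists about disjoint curves.

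The heart of the argument is to extract the individual pieces $A_1A_2^{-1}$, $B_1B_2^{-1}$, $C_1C_2^{-1}$ from $W$ and from $R$-conjugates of $W$. Here I would follow the mechanism of~\cite{mk1}: applying powers of $R$ to $W$ permutes the indices of the $b$'s and $c$'s cyclically (since $R(b_i)=b_{i+1}$ and similarly for the $c_i$, $d_i$ curves, with $R$ fixing or cyclically moving the relevant $a$-curves appropriately), producing a whole family of products of disjoint Dehn twists in $G$; then one forms quotients $W_k W_{k+1}^{-1}$ of consecutive conjugates so that all but one (or two) of the four twist factors cancel, leaving a "difference of two twists" such as $B_jB_{j+1}^{-1}$, $C_jC_{j+1}^{-1}$, or $A_1A_2^{-1}$-type elements. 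Because $a_1,b_2,c_4,a_3$ sit in distinct "tracks" under the $R$-action, the bookkeeping of which factors survive is the combinatorial core; I would organize it exactly as the corresponding lemma in~\cite{mk1} does, invoking that paper for the orientable-surface statements about the $a_i,b_i,c_i$ curves (which are identical here). Once $A_1A_2^{-1}$, $B_1B_2^{-1}$ and $C_1C_2^{-1}$ are in $G$, Theorem~\ref{t1}'s proof machinery (the $G$-invariant equivalence relation $\mathcal{G}$ on two-sided nonseparating curves) upgrades these to all the individual $A_i$, $B_i$, $C_j$, and hence $R\in G$ gives everything generated by the orientable-type data.

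What remains is to capture $D_5$ and $E$, and this is where the new involution $\sigma$ does its work. The reflection $\sigma$ satisfies $\sigma(b_r)=d_r$ (with $r=5$) and $\sigma(f)=a_1$, and it conjugates Dehn twists to Dehn twists (up to sign, tracked by whether $\sigma$ preserves the local orientation, per the Conjugation property). So once $B_5\in G$ is established, $\sigma B_5\sigma^{-1}=D_5^{\pm1}\in G$, giving $D_5\in G$; and once $A_1\in G$, $\sigma A_1 \sigma^{-1}=F^{\pm1}\in G$, which together with the Chain-relation/lantern-type expression for $E$ in terms of twists already in $G$ (or a direct conjugation argument identifying $E$ as a $\sigma$- or $R$-image of a curve we already control) puts $E\in G$. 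Having produced $R,A_1A_2^{-1},B_1B_2^{-1},C_1C_2^{-1},D_5,E$ inside $G$, Theorem~\ref{t1} forces $G=\mathcal{T}_{12}$; and since $\rho_1,\rho_2,\sigma$ and $\rho_1A_1B_2C_4A_3$ are all involutions (the last because $A_1B_2C_4A_3$ is a product of commuting twists and $\rho_1$ is an involution conjugating this product to its inverse — a point I would verify carefully), we are done. I expect the main obstacle to be precisely this last verification together with pinning down $E$: showing the fourth generator squares to the identity requires knowing that $\rho_1$ sends each of $a_1,b_2,c_4,a_3$ to itself with reversed orientation (so $\rho_1(A_1B_2C_4A_3)\rho_1^{-1}=(A_1B_2C_4A_3)^{-1}$), and locating $E$ inside $G$ may need an explicit curve identification rather than a formal cancellation.
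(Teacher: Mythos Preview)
Your overall architecture matches the paper's: get $R=\rho_2\rho_1$, extract $E_1=A_1B_2C_4A_3$, manufacture the differences $A_1A_2^{-1}$, $B_1B_2^{-1}$, $C_1C_2^{-1}$ from $R$-translates of $E_1$, then use $\sigma$ for $D_5$ and $E$. Two points, however, are off.

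First, your description of the extraction step is not what actually works. Forming plain quotients $W_kW_{k+1}^{-1}$ of consecutive $R$-conjugates does \emph{not} make factors cancel: $RE_1R^{-1}=A_2B_3C_5A_4$ shares no twist with $E_1$, so $E_2E_1^{-1}$ is a product of eight twists, not two. The paper's mechanism (and that of \cite{mk1}) is a conjugation trick: one checks that the diffeomorphism $E_2E_1$ sends the tuple $(a_2,b_3,c_5,a_4)$ to $(b_2,a_3,c_5,a_4)$, so that $(E_2E_1)E_2(E_2E_1)^{-1}=B_2A_3C_5A_4=:E_3$ lies in the subgroup. Iterating this --- conjugating an $R$-translate by a product of two previously obtained elements and then comparing --- is how one eventually isolates $A_iB_i^{-1}$, $A_iC_i^{-1}$, $B_iC_i^{-1}$ and $B_{i+1}C_i^{-1}$, from which the three required differences are assembled. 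This curve-tracking computation is the genuine combinatorial content; the cancellation heuristic you describe does not substitute for it.

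Second, your involution check is based on a wrong premise. The reflection $\rho_1$ does \emph{not} fix $a_1$ and $a_3$ separately; rather $\rho_1(a_1)=a_3$ (and $\rho_1(b_2)=b_2$, $\rho_1(c_4)=c_4$). With the orientation reversal this gives $\rho_1A_1\rho_1=A_3^{-1}$, $\rho_1A_3\rho_1=A_1^{-1}$, $\rho_1B_2\rho_1=B_2^{-1}$, $\rho_1C_4\rho_1=C_4^{-1}$, and since the four twists commute one still obtains $\rho_1E_1\rho_1=E_1^{-1}$, so $(\rho_1E_1)^2=1$. Your conclusion is right, but the reason is the swap $a_1\leftrightarrow a_3$, not four fixed curves.

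For $E$: the paper does it exactly by your ``direct conjugation'' option. From $\sigma(a_1)=f$ one gets $F\in G$ once $A_1\in G$; then $A_1(f)=e$ gives $E=A_1FA_1^{-1}\in G$. No lantern or chain relation is needed.
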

\begin{proof}
Consider the surface $N_{12}$ as in Figure~\ref{C}. Since 
\[
 \rho_1(a_1)=a_3, \rho_1(b_2)=b_2 \textrm{ and }
	\rho_1(c_4)=c_4,
\]
and $\tau$ reverses the orientation of a neighbourhood of a two-sided simple closed curve, we get
\begin{itemize}
	\item $\rho_1A_1\rho_1=A_{3}^{-1}$,
	\item $\rho_1B_2\rho_1=B_{2}^{-1}$ and
	\item $\rho_1C_4\rho_1=C_{4}^{-1}$.
	\end{itemize}
	 It is easy to verify that 
	$\rho_1 A_1 B_2 C_4 A_3$ is an involution. Let $E_1=A_1 B_2 C_4 A_3$
	and let $H$ be the subgroup of $\mathcal{T}_{12}$ generated by the set
	\[
	\lbrace \rho_1,\rho_2, \rho_1E_1, \sigma\rbrace.
	\]
Note that the rotation $R$ is in the subgroup $H$. By Theorem~\ref{t1}, we need to show 
that the elements $A_1A_{2}^{-1}, B_1B_{2}^{-1}, C_1C_{2}^{-1},D_5$ and $E$ are 
contained in $H$.\\
\noindent
Let $E_2=RE_1R^{-1}=A_2B_3C_5A_4$.
It can be easily shown that
\[
E_2E_1(a_2,b_3,c_5,a_4)=(b_2,a_3,c_5,a_4),
\]
so that $E_3=B_2A_3C_5A_4$	is in $H$.\\
\noindent
Let 
\[
E_4=R^{2}E_1R^{-2}=A_3B_4C_1A_5.
\]
It is easy to show that
\[
E_4E_3(a_3,b_4,c_1,a_5)=(a_3,a_4,b_2,a_5)
\]
so that $E_5=A_3A_4B_2A_5$ are contained in $H$. Hence,
\[
E_5E_{3}^{-1}=A_5C_{5}^{-1}\in H.
\]
One can easily see that the elements $A_iC_{i}^{-1}$ are contained in $H$ by 
conjugating $A_5C_{5}^{-1}$ with powers of $R$.\\
\noindent
Let 
\[
E_6=RE_5R^{-1}=A_4A_5B_3A_1. 
\]
One can easily show that 
\[
E_6E_5(a_4,a_5,b_3,a_1)=(a_4,a_5,a_3,a_1)
\]
so that $E_7=A_4A_5A_3A_1$ is in $H$. Therefore,
\[
E_7E_{6}^{-1}=A_3B_{3}^{-1}\in H.
\]
By conjugating with powers of $R$, we get $A_iB_{i}^{-1}\in H$. Hence,
\[
B_5C_{5}^{-1}=(B_5A_{5}^{-1})(A_5C_{5}^{-1})\in H.
\]
Again by conjugating with powers of $R$, the elements $B_iC_{i}^{-1}$ 
are contained in $H$.\\
\noindent
Let
\[
E_8=(A_2B_{2}^{-1})(B_3A_{3}^{-1})E_1=A_1A_2C_4B_3
\]
and 
\[
E_9=R^{2}F_8R^{-2}=A_3A_4C_1B_5.
\]
It can also be shown that 
\[
E_9E_8(a_3,a_4,c_1,b_5)=(b_3,a_4,c_1,c_4)
\]
so that $E_{10}=B_3A_4C_1C_4$. Hence,
\[
E_9E_{10}^{-1}B_{3}A_{3}^{-1}=B_5C_{4}^{-1}\in H.
\]
The conjugation of this with powers of $R$ implies that $B_{i+1}C_{i}^{-1}\in H$. Hence
\begin{itemize}
	\item $A_1A_{2}^{-1}=(A_1C_{1}^{-1})(C_1B_{2}^{-1})(B_2A_{2}^{-1})$,
	\item $B_1B_{2}^{-1}=(B_1C_{1}^{-1})(C_1B_{2}^{-1})$ and
	\item $C_1C_{2}^{-1}=(C_1B_{2}^{-1})(B_2C_{2}^{-1})$
	\end{itemize}
are contained in $H$. Also it follows from the fact that 
\[
\sigma(a_1)=f \text{ and } \sigma(b_5)=d_5
\]
with a choice of orientations of regular neighbourhoods of the curves, the element 
$D_5$ and $F$ are contained in $H$. By the fact that $A_1(f)=e$, $E$ is in $H$. 
We conclude that $H=T_{12}$.
\end{proof}
\
\begin{theorem}\label{rodd}
For $g=2r+2$, the twist subgroup $\mathcal{T}_{g}$ is generated by the 
involutions $\rho_1,\rho_2,\rho_1A_1B_2C_{\frac{r+3}{2}} A_3$ and $\sigma$ if $r\geq7$ 
and odd.
\end{theorem}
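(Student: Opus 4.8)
The plan is to mimic the structure of the proof of Theorem~\ref{g12} almost verbatim, replacing the specific curve $c_4$ (and index $5=r$) with $c_{(r+3)/2}$ (and the general $r$), and checking that every algebraic manipulation used there goes through when $r\ge 7$ is odd. First I would set $E_1=A_1B_2C_{(r+3)/2}A_3$ and verify, exactly as before, that $\rho_1(a_1)=a_3$, $\rho_1(b_2)=b_2$, $\rho_1(c_{(r+3)/2})=c_{(r+3)/2}$ (the latter because the chosen curve is fixed by the order-two rotation $\rho_1'$ up to isotopy for odd $r$), so that $\rho_1 E_1\rho_1 = A_3^{-1}B_2^{-1}C_{(r+3)/2}^{-1}A_1^{-1}=E_1^{-1}$, hence $\rho_1 E_1$ is an involution. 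Let $H=\langle \rho_1,\rho_2,\rho_1 E_1,\sigma\rangle$; since $R=\rho_2\rho_1$, we have $R\in H$, and by Theorem~\ref{t1} it suffices to show $A_1A_2^{-1}, B_1B_2^{-1}, C_1C_2^{-1}, D_r, E\in H$.

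Next I would run the chain of word identities from Theorem~\ref{g12} in the general setting. Conjugating $E_1$ by powers of $R$ produces $E_1^{(k)}=A_{1+k}B_{2+k}C_{(r+3)/2+k}A_{3+k}$ (indices mod $r$ on the $a,b$ letters, mod $r-1$ on the $c$ letters, with the usual care near the wrap-around). The key moves are: (i) from two consecutive conjugates one extracts, via a product acting correctly on a quadruple of curves, an element of the form $A_iC_i^{-1}$; (ii) another combination yields $A_iB_i^{-1}$; (iii) combining these gives $B_iC_i^{-1}$ and then $B_{i+1}C_i^{-1}$; and (iv) these last relations telescope to give $A_1A_2^{-1}=(A_1C_1^{-1})(C_1B_2^{-1})(B_2A_2^{-1})$, and similarly $B_1B_2^{-1}$ and $C_1C_2^{-1}$. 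The main point to check is that the specific index $(r+3)/2$ makes the relevant curves in each "acts on a quadruple" step pairwise disjoint or intersecting once in the standard way, exactly as $c_4$ did when $r=5$; this is what forces the hypothesis that $r$ is odd (so $(r+3)/2$ is an integer) and $r\ge 7$ (so the index $(r+3)/2$ is genuinely interior, $2<(r+3)/2<r-1$, leaving room for the disjointness arguments and avoiding degenerate coincidences with $a_1,a_3,b_2$). Finally, $\sigma(a_1)=f$, $\sigma(b_r)=d_r$, and $A_1(f)=e$ give $F,D_r,E\in H$ once we know $A_1,B_r\in H$ (which follow from the Dehn twists being recovered along the way, as in the proof of Theorem~\ref{t1}), so $H=\mathcal{T}_g$.

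The hard part will be bookkeeping the index arithmetic uniformly in $r$: in Theorem~\ref{g12} the numbers $1,2,3,4,5$ are small enough that each "$E_iE_j$ acts on $(\cdots)$ by $(\cdots)$" line can be eyeballed, but for general odd $r$ one must argue once and for all that $c_{(r+3)/2}$ is disjoint from $a_1,a_3,b_2$ and that its $R$-orbit interacts with the $a_i,b_i$ as required, then invoke $G$-invariance of the equivalence relation $\mathcal{G}$ from the proof of Theorem~\ref{t1} (equivalently, from \cite{mk1}) to propagate the single computed relation $A_iC_i^{-1}\in H$ (resp. $A_iB_i^{-1}$, $B_{i+1}C_i^{-1}$) to all indices by conjugating with powers of $R$. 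I expect the cleanest writeup is: (a) one figure analogous to Figure~\ref{E} showing the relevant curves for general $r$, (b) the observation that all needed geometric incidences are the images under powers of $R$ of the incidences already verified in Theorem~\ref{g12}, and (c) a verbatim repetition of the algebraic chain with $4$ replaced by $(r+3)/2$ and $5$ by $r$. No step should require a genuinely new idea beyond Theorem~\ref{g12}; the content is checking that the case $r=5$ was not accidental.
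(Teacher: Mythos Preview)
Your plan has a genuine gap: the chain of identities in Theorem~\ref{g12} does \emph{not} generalize verbatim, because the $r=5$ case exploits a numerical coincidence that disappears for $r\ge 7$. Concretely, in the $g=12$ proof the element $E_4=R^2E_1R^{-2}$ equals $A_3B_4C_1A_5$: the index on $C$ is $1$ only because $4+2\equiv 1\pmod 5$, i.e.\ the $C$--index wraps around after just two steps of $R$. It is precisely the adjacency of $c_1$ and $b_2$ that makes $E_4E_3$ send $c_1$ to $b_2$ and produces $E_5=A_3A_4B_2A_5$, from which $E_5E_3^{-1}=A_5C_5^{-1}$ drops out. For $r\ge 7$ one has instead $E_4=A_3B_4C_{(r+7)/2}A_5$ with $(r+7)/2\ge 7$, so $c_{(r+7)/2}$ is disjoint from every curve appearing in $E_3$ and $E_4$; the conjugation fixes it, $E_5$ becomes $A_3A_4C_{(r+7)/2}A_5$, and $E_5E_3^{-1}$ is a product of four (not two) commuting twists. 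The same wrap--around is used again at the step $E_6E_5\mapsto E_7$. So your claimed item~(i), ``from two consecutive conjugates one extracts $A_iC_i^{-1}$'', simply fails; no $A$--index in any $R^kE_1R^{-k}$ ever coincides with its $C$--index when $r\ge 7$.

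What the paper actually does is different in exactly this respect. From the first few conjugates one can still extract $B_iA_i^{-1}$ (your chain would give $E_5E_4^{-1}=A_4B_4^{-1}$, which is fine), but to get any relation involving a $C$--twist one must deliberately conjugate by an $r$--dependent power of $R$, namely $R^{(r-3)/2}$ and $R^{(r-1)/2}$, so as to carry the index $(r+7)/2$ on $C$ back down to $2$ and $3$; only then do the $C$--curves meet the low--index $B$--curves and the cancellation trick yields $B_3C_2^{-1}$ and $B_3C_3^{-1}$. This is a genuinely new step, not a rotation of something already done for $r=5$, and it even forces a small case distinction at $r=7$ (where two of the rotated indices collide). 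Your assertion that ``the case $r=5$ was not accidental'' is therefore exactly backwards: the accidental wrap--around at $r=5$ is what makes Theorem~\ref{g12} short, and replacing it is the actual content of Theorem~\ref{rodd}.
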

\begin{proof}
Consider the surface $N_{g}$ as in Figure~\ref{C}. We have
\[
 \rho_1(a_1)=a_3, \rho_1(b_2)=b_2 \textrm{ and } \rho_1(c_{\frac{r+3}{2}})=c_{\frac{r+3}{2}}.
\]
Since $\tau$ reverses the orientation of a neighbourhood of a two-sided simple closed curve, we get
\begin{itemize}
	\item $\rho_1A_1\rho_1=A_{3}^{-1}$ 
	\item $\rho_1B_2\rho_1=B_{2}^{-1}$ and
	\item $\rho_1C_{\frac{r+3}{2}}\rho_1=C_{\frac{r+3}{2}}^{-1}$.
	\end{itemize}
 It can be shown that 
	$\rho_1A_1B_2C_{\frac{r+3}{2}}A_3$ is an involution. 
Let $G_1=A_1B_2C_{\frac{r+3}{2}}A_3$  and let $K$ be the subgroup of $\mathcal{T}_{g}$ generated by the set
	\[
	\lbrace \rho_1,\rho_2, \rho_1G_1, \sigma\rbrace .
	\]
Note that the rotation $R$ is in $K$. By Theorem	~\ref{t1}, we need to 
show that the elements $A_1A_{2}^{-1}, B_1B_{2}^{-1}, C_1C_{2}^{-1},D_r$ and $E$ 
are contained in $K$.\\
\noindent
It follows from 
\begin{itemize}
\item$G_2=RG_1R^{-1}=A_2B_3C_{\frac{r+5}{2}}A_4\in K$,
\item $G_3=(G_2G_1)G_2(G_2G_1)^{-1}=B_2A_3C_{\frac{r+5}{2}}A_4\in K$,
\item $G_4=RG_3R^{-1}=B_3A_4C_{\frac{r+7}{2}}A_5\in K$,
\item $G_5=(G_4G_3)G_4(G_4G_3)^{-1}=A_3A_4C_{\frac{r+7}{2}}A_5\in K$
\end{itemize}
that 
\[
G_4G_{5}^{-1}=B_3A_{3}^{-1}\in K.
\]
Hence, the elements $B_iA_{i}^{-1}$ are contained in $K$ by conjugating $B_3A_{3}^{-1}$
with powers of $R$. Let
\begin{itemize}
\item $G_6=R^{\frac{r-3}{2}}G_4R^{\frac{3-r}{2}}=B_{\frac{r+3}{2}}A_{\frac{r+5}{2}}C_2A_{\frac{r+7}{2}} \in K$,
\item $G_7=(G_6G_4)G_6(G_6G_4)^{-1}=B_{\frac{r+3}{2}}A_{\frac{r+5}{2}}B_3A_{\frac{r+7}{2}} \in K$ if $r>7$,\\
($G_7=A_5A_6B_3A_7 \in K$ if $g=7$).
\end{itemize}
Then 
\[
G_7G_{6}^{-1}=B_3C_{2}^{-1} \in K \textrm{ if } r>7
\]
and
\[
G_7G_{6}^{-1}B_5A_{5}^{-1}=B_3C_{2}^{-1} \in K \textrm{ if } r=7.
\]
Therefore, the elements $B_{i+1}C_{i}^{-1}$ are contained in the group $K$ by 
conjugating $B_3C_{2}^{-1}$ with powers of $R$. Let 
 \begin{itemize}
 \item $G_8=R^{\frac{r-1}{2}}G_4R^{\frac{1-r}{2}}=B_{\frac{r+5}{2}}A_{\frac{r+7}{2}}C_3A_{\frac{r+9}{2}} \in K$ if $r>7$,\\
 ($G_8=B_6A_7C_3A_1 \in K$ if $r=7$),
 \item $G_{9}=(G_8G_4)G_8(G_8G_4)^{-1}=B_{\frac{r+5}{2}}A_{\frac{r+7}{2}}B_3A_{\frac{r+9}{2}} \in K$ if $r>7$.\\
 ($G_9=B_6A_7B_3A_1 \in K$ if $r=7$).
 \end{itemize}
 Then 
 \[
 G_9G_{8}^{-1}=B_3C_3^{-1} \in K
\textrm{ if } r\geq7.
\]
This implies that the subgroup $K$ contains $B_{i}C_{i}^{-1}$ by 
conjugating $B_3C_{3}^{-1}$ with powers of $R$.
The rest of the proof is very similar to the proof of Theorem~\ref{g12}. 
\end{proof}
\begin{theorem}\label{t3.5}
For $g=2r+2$, the twist subgroup $\mathcal{T}_{g}$ is generated by 
the involutions $\rho_1,\rho_2,\rho_1A_2C_{\frac{r}{2}}B_{\frac{r+4}{2}} C_{\frac{r+6}{2}}$ and $\sigma$ if $r\geq6$ and even.
\end{theorem}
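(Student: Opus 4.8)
The plan is to follow the same strategy as in the proofs of Theorem~\ref{g12} and Theorem~\ref{rodd}: exhibit a subgroup $L$ of $\mathcal{T}_g$ generated by the four claimed involutions, observe that the rotation $R=\rho_2\rho_1$ lies in $L$, and then use Theorem~\ref{t1} to reduce the problem to showing that the six generators $R$, $A_1A_2^{-1}$, $B_1B_2^{-1}$, $C_1C_2^{-1}$, $D_r$ and $E$ all belong to $L$. First I would verify, using that $\tau$ reverses orientation on a neighbourhood of a two-sided curve and that $\rho_1=\rho_1'\tau$ fixes the relevant curves, the conjugation relations $\rho_1 A_2\rho_1=A_2^{-1}$, $\rho_1 C_{r/2}\rho_1=C_{r/2}^{-1}$, $\rho_1 B_{(r+4)/2}\rho_1=B_{(r+4)/2}^{-1}$ and $\rho_1 C_{(r+6)/2}\rho_1=C_{(r+6)/2}^{-1}$; since the four curves $a_2$, $c_{r/2}$, $b_{(r+4)/2}$, $c_{(r+6)/2}$ are pairwise disjoint, the element $H_1:=A_2C_{r/2}B_{(r+4)/2}C_{(r+6)/2}$ commutes with itself appropriately and $\rho_1H_1$ squares to the identity, so it is a genuine involution, and $H_1=\rho_1(\rho_1H_1)\in L$.

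Next I would generate a supply of ``difference twists'' inside $L$ by the bootstrapping trick used before: conjugating $H_1$ by powers of $R$ produces $H_1^{(k)}=R^kH_1R^{-k}$, which shifts all four indices by $k$ simultaneously; then for two such elements $P=X_{i_1}X_{i_2}X_{i_3}X_{i_4}$ and $Q=X_{j_1}\cdots$ that share three of their four curves, the product $QP$ fixes the three shared curves and sends the fourth to the image of the non-shared curve of $Q$, which lets one replace one factor and read off a product like $X_iX_j^{-1}$ after composing with $P^{-1}$ or a previously-obtained difference. Carrying this out with suitable choices of exponents $k$ (chosen so that exactly three of the four indices match up; here the index pattern $2,\ r/2,\ (r+4)/2,\ (r+6)/2$ is spread out enough that for $r\geq 12$ — i.e. $g\geq 26$ — generic shifts overlap in at most one place, and small shifts give the desired three-fold overlaps) I expect to obtain in turn $A_iC_i^{-1}$, $A_iB_i^{-1}$ and hence $B_iC_i^{-1}$, and then $B_{i+1}C_i^{-1}$ or $C_iB_{i+1}^{-1}$, for all $i$ in the appropriate range. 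From these, exactly as in Theorem~\ref{g12}, the three elements $A_1A_2^{-1}=(A_1C_1^{-1})(C_1B_2^{-1})(B_2A_2^{-1})$, $B_1B_2^{-1}=(B_1C_1^{-1})(C_1B_2^{-1})$ and $C_1C_2^{-1}=(C_1B_2^{-1})(B_2C_2^{-1})$ land in $L$.

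Finally I would bring in the reflection $\sigma$ to capture the two remaining generators: since $\sigma$ conjugates a Dehn twist about a curve to the Dehn twist (up to inverse, absorbed into $L$ because difference twists are already available) about the image curve, the relations $\sigma(b_r)=d_r$ and $\sigma(f)=a_1$ give $D_r\in L$ once some $B_r$-type twist is known, and $E\in L$ follows from $A_1(f)=e$ together with $F\in L$, which in turn comes from $\sigma(a_1)=f$. A small bookkeeping point: one must first promote one of the difference relations to an honest single Dehn twist before $\sigma$ and $\rho_1$ can be used to transport it — this is done exactly as in the earlier proofs, where a product of two overlapping quadruple-twist elements collapses to a single twist. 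The main obstacle I anticipate is purely combinatorial: checking that for $r\geq 6$ even there really is a choice of $R$-conjugates whose index sets overlap in exactly three places (and not two or four), since the four indices are not in arithmetic progression; this requires a short case analysis, with the genuinely small cases (small even $r$) possibly needing the separate ad hoc arguments that the Main Theorem already flags for $g=6,8,10$. Modulo this verification the argument is a routine adaptation of Theorem~\ref{g12}, so I would state ``the rest of the proof is very similar to the proof of Theorem~\ref{g12}'' once the first few difference twists have been exhibited.
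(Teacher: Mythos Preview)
Your overall strategy matches the paper's exactly: show the third element is an involution, note $R=\rho_2\rho_1\in L$, extract the ``difference twists'' $A_1A_2^{-1}$, $B_1B_2^{-1}$, $C_1C_2^{-1}$ from $H_1$ and its $R$-conjugates, and then use $\sigma$ to pick up $D_r$ and $E$. Two concrete problems, however, keep the proposal from being a proof.

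First, a small but telling slip: $\rho_1$ does \emph{not} fix $c_{r/2}$ and $c_{(r+6)/2}$ individually; it swaps them (while fixing $a_2$ and $b_{(r+4)/2}$). The paper records $\rho_1(c_{r/2})=c_{(r+6)/2}$ and hence $\rho_1 C_{r/2}\rho_1=C_{(r+6)/2}^{-1}$. Your conclusion that $\rho_1H_1$ is an involution still holds, since the four twists commute, but your stated conjugation relations are wrong.

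Second, and more seriously, the bootstrapping mechanism you describe does not exist. No two distinct $R$-conjugates $H_1^{(k)}$ share \emph{any} curve, let alone three: $R$ shifts every index by the same amount and each curve type ($a$, $b$, $c$) occurs at most twice in $H_1$, so the claim that ``small shifts give the desired three-fold overlaps'' is simply false. The actual technique (as in Theorems~\ref{g12} and~\ref{rodd}) is to conjugate $H_2=RH_1R^{-1}$ by the product $H_2H_1$; this sends each curve of $H_2$ to its image under $H_2H_1$, and the specific \emph{intersection} pattern of the curves (not shared curves) is what replaces one or two factors at a time. In the paper's execution one gets, in order,
\[
H_3=(H_2H_1)H_2(H_2H_1)^{-1}=A_3B_{\frac{r+4}{2}}C_{\frac{r+6}{2}}C_{\frac{r+8}{2}},
\]
then $H_4=RH_3R^{-1}$, then $H_5=(H_4H_3)H_4(H_4H_3)^{-1}$, yielding $H_4H_5^{-1}=B_{\frac{r+6}{2}}C_{\frac{r+6}{2}}^{-1}$ and, from $H_2H_3^{-1}$, the element $C_{\frac{r+2}{2}}B_{\frac{r+4}{2}}^{-1}$. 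So the first differences to appear are $B_iC_i^{-1}$ and $C_iB_{i+1}^{-1}$, not $A_iC_i^{-1}$; one then modifies $H_1$ using these to build $H_6=B_{\frac{r+6}{2}}B_{\frac{r}{2}}A_2B_{\frac{r+4}{2}}$, shifts by $R^{(r-4)/2}$, and repeats the conjugation trick to finally extract $B_iA_i^{-1}$. Your ``share three curves'' picture misses that this is driven by how the chain curves intersect, which is why the combinatorial obstacle you flag is not the real one.
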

\begin{proof}
Consider the surface $N_{g}$ as in Figure~\ref{C}. The involution $\rho_1$ satisfies
\[
 \rho_1(a_2)=a_2, \rho_1(b_{\frac{r+4}{2}})=b_{\frac{r+4}{2}} \textrm{ and } \rho_1(c_{\frac{r}{2}})=c_{\frac{r+6}{2}}.
\]
Since $\tau$ reverses the orientation of a neighbourhood of a two-sided simple closed curve, we have
\begin{itemize}
	\item $\rho_1A_2\rho_1=A_{2}^{-1}$ 
	\item $\rho_1B_{\frac{r+4}{2}}\rho_1=B_{\frac{r+4}{2}}^{-1}$ and
	\item $\rho_1C_{\frac{r}{2}}\rho_1=C_{\frac{r+6}{2}}^{-1}$.
	\end{itemize}
 It can be 
shown that $\rho_1A_2C_{\frac{r}{2}}B_{\frac{r+4}{2}} C_{\frac{r+6}{2}}$ 
is an involution. Let $H_1=A_2C_{\frac{r}{2}}B_{\frac{r+4}{2}} C_{\frac{r+6}{2}}$ 
and let $K$ be the subgroup of $\mathcal{T}_{g}$ generated by the set
	\[
	\lbrace \rho_1,\rho_2, \rho_1H_1, \sigma\rbrace .
	\]
Note that the rotation $R$ is in $K$. By Theorem	~\ref{t1}, we need to show that 
the elements $A_1A_{2}^{-1}, B_1B_{2}^{-1}, C_1C_{2}^{-1},D_r$ and $E$ are contained in $K$.\\
Let
\begin{itemize}
\item$H_2=RH_1R^{-1}=A_3C_{\frac{r+2}{2}}B_{\frac{r+6}{2}}C_{\frac{r+8}{2}} \in K$,
\item $H_3=(H_2H_1)H_2(H_2H_1)^{-1}=A_3B_{\frac{r+4}{2}}C_{\frac{r+6}{2}}C_{\frac{r+8}{2}}\in K$,
\item $H_4=RH_3R^{-1}=A_4B_{\frac{r+6}{2}}C_{\frac{r+8}{2}}C_{\frac{r+10}{2}}\in K$,
\item $H_5=(H_4H_3)H_4(H_4H_3)^{-1}=A_4C_{\frac{r+6}{2}}C_{\frac{r+8}{2}}C_{\frac{r+10}{2}}\in K$.
\end{itemize}
Then, we get
\[
H_4H_5^{-1}=B_{\frac{r+6}{2}}C_{\frac{r+6}{2}}^{-1} \in K
\]
and 
\[
H_2H_3^{-1}\Big(C_{\frac{r+6}{2}}B_{\frac{r+6}{2}}^{-1}\Big)=C_{\frac{r+2}{2}}B_{\frac{r+4}{2}}^{-1}\in K.
\]
By conjugating the elements $B_{\frac{r+6}{2}}C_{\frac{r+6}{2}}^{-1}$ and $C_{\frac{r+2}{2}}B_{\frac{r+4}{2}}^{-1}$ with powers of $R$, we conclude that $B_iC_{i}^{-1}$  and $C_iB_{i+1}^{-1}$ are contained in $K$.\\
\noindent
Let
\begin{itemize}
\item $H_6=(B_{\frac{r+6}{2}}C_{\frac{r+6}{2}}^{-1})(B_{\frac{r}{2}}C_{\frac{r}{2}}^{-1})H_1=B_{\frac{r+6}{2}}B_{\frac{r}{2}}A_2B_{\frac{r+4}{2}} \in K$,
\item $H_7=R^{\frac{r-4}{2}}H_6R^{\frac{4-r}{2}}=A_{\frac{r}{2}}B_{r-2}B_rB_1 \in K$,
\item $H_8=(H_7H_6)H_7(H_7H_6)^{-1}=B_{\frac{r}{2}}B_{r-2}B_rB_1 \in K$.
\end{itemize}
Then
\[
H_8H_7^{-1}=B_{\frac{r}{2}}A_{\frac{r}{2}}^{-1} \in K.
\]
By conjugating with powers of $R$, $K$ contains $B_iA_{i}^{-1}$. The rest of the proof is very similar to the proof of Theorem~\ref{g12}. 
\end{proof}
In the rest of this section, we introduce involution generators for $\mathcal{T}_g$ for $g=6,8$ and $10$.\\
\begin{figure}[h]
\begin{center}
\scalebox{0.35}{\includegraphics{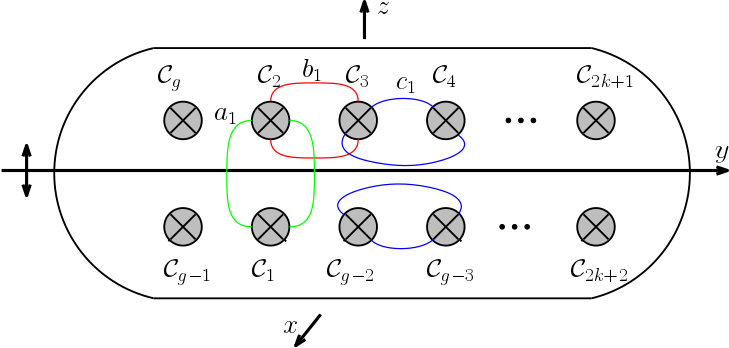}}
\caption{The involution $\delta_1$ for $g=4k+2$.}
\label{D1}
\end{center}
\end{figure}
\begin{figure}[h]
\begin{center}
\scalebox{0.35}{\includegraphics{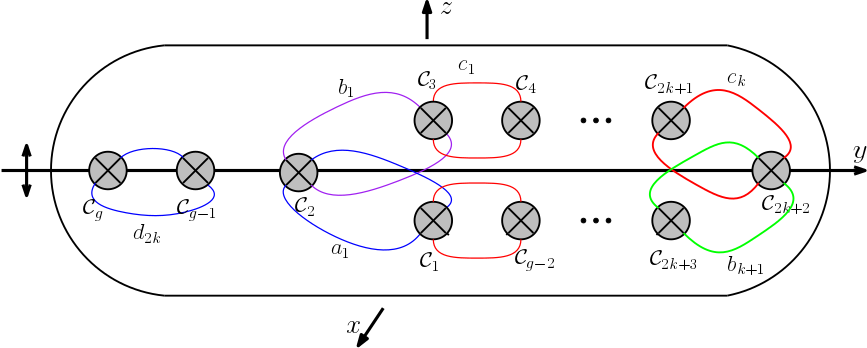}}
\caption{The involution $\delta_2$ for $g=4k+2$.}
\label{D2}
\end{center}
\end{figure}
\begin{figure}[h]
\begin{center}
\scalebox{0.35}{\includegraphics{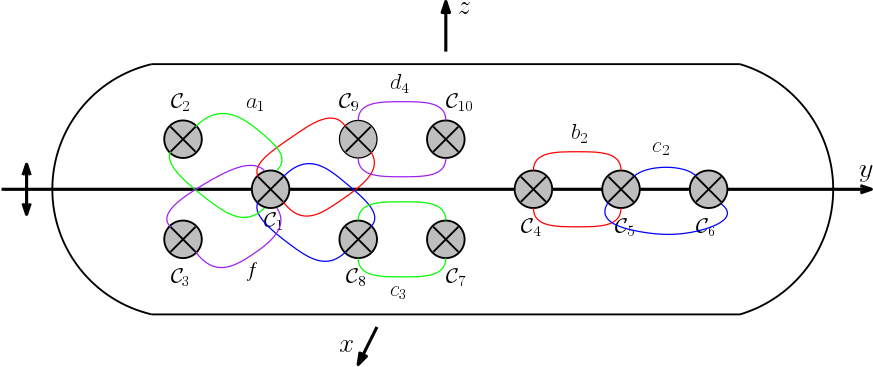}}
\caption{The involution $\delta_3$ for $g=10$.}
\label{D3}
\end{center}
\end{figure}
We consider the models for the surface $N_{10}$, where $10$-crosscaps are 
distributed on the sphere as in Figure~\ref{D1}, ~\ref{D2} and ~\ref{D3}.  There are 
reflections, $\delta_1,\delta_2$ and $\delta_3$, of the surface $N_{10}$ in the $xy$-plane such that 
\begin{itemize}
	\item $\delta_1(x_i)=x_{i+1}$ if $i=1,5,9$, \\
	$\delta_1(x_3)=x_{8}$, $\delta_1(x_4)=x_{7}$,
	\item $\delta_2(x_i)=x_{i}$ if $i=2,6,9,10$,\\
	 $\delta_2(x_1)=x_{3}$, $\delta_2(x_4)=x_{8}$,  $\delta_2(x_5)=x_{7}$ and
	\item $\delta_3(x_i)=x_{i}$ if $i=1,4,5,6$,\\
	 $\delta_3(x_2)=x_{3}$, $\delta_3(x_8)=x_{9}$,  $\delta_3(x_7)=x_{10}$.
\end{itemize}
 Recall that $x_i$'s are the generators of $H_1(N_g;\mathbb{R})$ 
as shown in Figure~\ref{H}. Note that the involutions $\delta_1,\delta_2$ and $\delta_3$ reverse the orientation of a neighbourhood of a two-sided simple closed curve. Since $D(\delta_i)=1$, the involutions $\delta_i$ are in $\mathcal{T}_{10}$ for $i=1,2,3$.
\begin{theorem}
The twist subgroup $\mathcal{T}_{10}$ is generated by five involutions $\delta_1,\delta_2, \delta_2\delta_1\delta_2A_2, \delta_1A_1,\delta_3$. 
\end{theorem}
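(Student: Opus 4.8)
The plan is to imitate the structure of the proof of Theorem~\ref{g12}, using the reflections $\delta_1,\delta_2,\delta_3$ in place of $\rho_1,\rho_2,\sigma$, and to reduce everything to the Dehn-twist generating set of Theorem~\ref{thm1} (for $g=10$, i.e. $r=4$: the twists $A_1,A_2,B_1,\dots,B_4$, $C_1,C_2,C_3$, $D_4$ and $E$). First I would record the elementary conjugation facts: since $\delta_1,\delta_2,\delta_3$ all reverse orientation on neighbourhoods of two-sided curves, each $\delta_i$-conjugate of a Dehn twist is the inverse of the Dehn twist about the image curve. In particular one checks that $\delta_1 A_1\delta_1 = A_1^{-1}$ (so $\delta_1 A_1$ really is an involution, since $A_1$ is $\delta_1$-invariant in the appropriate sense) and similarly that $\delta_2\delta_1\delta_2$ conjugates $A_2$ to $A_2^{\pm1}$ making $\delta_2\delta_1\delta_2 A_2$ an involution; these are the two ``twisted'' generators. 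I would also note that products like $\delta_2\delta_1$ and $\delta_1\delta_3$ are finite-order rotation-type elements lying in $\mathcal{T}_{10}$, and I expect one of them to play the role that $R$ plays in Theorem~\ref{g12} — conjugation by it should cyclically permute the curves $b_i$ (and correspondingly $c_i$), which is the engine that upgrades one relation such as ``$B_3C_3^{-1}\in H$'' to ``$B_iC_i^{-1}\in H$ for all $i$''.

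The main body of the argument is then the same bootstrapping as in Theorem~\ref{g12}. Let $H$ be the subgroup generated by the five listed involutions. Using the two twisted generators together with the rotation-type element(s), I would successively produce elements of the form $A_i C_i^{-1}$, $A_i B_i^{-1}$, $B_i C_i^{-1}$, and the ``shifted'' versions $B_{i+1}C_i^{-1}$, exactly by the trick used there: take a product like $\delta_1 A_1$, conjugate it by the rotation to move its support, multiply two such elements, observe that the product acts on the relevant ordered tuple of curves by swapping a $b$ with an $a$ (or a $c$), conclude that the correspondingly ``corrected'' product of twists lies in $H$, and then cancel to isolate a single ratio $XY^{-1}$ of twists about adjacent curves. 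Once all the ratios $A_iB_i^{-1}$, $B_iC_i^{-1}$, $B_{i+1}C_i^{-1}$ are in $H$, the identities
\[
A_1A_2^{-1}=(A_1C_1^{-1})(C_1B_2^{-1})(B_2A_2^{-1}),\quad B_1B_2^{-1}=(B_1C_1^{-1})(C_1B_2^{-1}),\quad C_1C_2^{-1}=(C_1B_2^{-1})(B_2C_2^{-1})
\]
put $A_1A_2^{-1},B_1B_2^{-1},C_1C_2^{-1}$ in $H$. Finally the reflection facts $\delta_3$ (and/or $\delta_1,\delta_2$) sending appropriate $b$-type curves to $d$-type curves and an $a$-type curve to the curve $e$ give $D_4,E\in H$ after choosing orientations of the regular neighbourhoods. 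Since $H$ then contains the full generating set of Theorem~\ref{thm1} together with $R$-type elements, Theorem~\ref{t1} (or directly Theorem~\ref{thm1}) gives $H=\mathcal{T}_{10}$.

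There are two routine verifications that must be done carefully but are not conceptually hard: checking that each of $\delta_2\delta_1\delta_2 A_2$ and $\delta_1 A_1$ is genuinely an involution (i.e. that the relevant curve is fixed by the relevant reflection with the orientation-reversing behaviour), and pinning down the exact action of the rotation-type element $\delta_2\delta_1$ (or $\delta_1\delta_3$) on the curves $b_i,c_i,x_i$ so that ``conjugate by powers of $R$'' does what we need. The main obstacle I anticipate is the bookkeeping of which specific curves the various products of $\delta_1 A_1$-type elements move, and in particular identifying at least one element of $H$ that produces the initial single ratio (the analogue of $A_5C_5^{-1}\in H$ in Theorem~\ref{g12}) — everything downstream is then forced. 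I would therefore spend most of the write-up carefully tracking a short explicit chain $E_1,E_2,\dots$ of products, each labelled with its effect on a 4-tuple of curves, exactly as in Theorems~\ref{g12} and~\ref{rodd}, and then invoke ``the rest is as in the proof of Theorem~\ref{g12}'' for the final cancellations and the recovery of $D_4$ and $E$.
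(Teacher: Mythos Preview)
Your plan misreads the analogy with Theorem~\ref{g12} and, as a result, proposes a far more elaborate argument than is needed---and one whose crucial first step (producing an initial ratio like $A_5C_5^{-1}$) is not actually available from these generators. In Theorem~\ref{g12} the twisted generator is $\rho_1E_1$ with $E_1=A_1B_2C_4A_3$ a product of \emph{four} commuting twists; multiplying by $\rho_1$ recovers only the product $E_1$, and the whole $E_1,E_2,\dots$ bootstrapping exists precisely to pry individual twists (or ratios) out of that product. Here the twisted generators are $\delta_1A_1$ and $\delta_2\delta_1\delta_2A_2$, each a reflection times a \emph{single} Dehn twist. Since $\delta_1$ and $\delta_2$ are themselves among the five generators, one has immediately
\[
A_1=\delta_1(\delta_1A_1)\in K,\qquad A_2=(\delta_2\delta_1\delta_2)(\delta_2\delta_1\delta_2A_2)\in K.
\]
This is the key observation you are missing: there is no need to manufacture ratios $A_iC_i^{-1}$ at all, because two honest Dehn twists are already in the subgroup.

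The paper's proof then finishes in a few lines by pushing $A_1$ around with the reflections: $\delta_2(a_1)=b_1$ gives $B_1\in K$; $\delta_2\delta_1(b_i)=c_i$ and $\delta_2\delta_1(c_i)=b_{i+1}$ give $C_1,B_2,C_2,B_3,C_3\in K$; then $\delta_3(c_3)=d_4$, $\delta_1\delta_2\delta_3\delta_1\delta_3(c_1)=b_4$ and $A_1\delta_3(a_1)=e$ supply $D_4,B_4,E$. One concludes directly from Omori's generating set (Theorem~\ref{thm1}), not via Theorem~\ref{t1}. Note also that your expectation about $\delta_2\delta_1$ is off: it does not cyclically permute the $b_i$ as $R$ does, but rather shifts along the Humphries chain $b_i\mapsto c_i\mapsto b_{i+1}$, which is exactly what makes the direct conjugation argument work and what would make your $R$-style bootstrapping awkward to set up.
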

\begin{proof}
Let $K$ be the subgroup of $\mathcal{T}_{10}$ generated by the set
\[
\lbrace \delta_1,\delta_2, \delta_2\delta_1\delta_2A_2, \delta_1A_1,\delta_3 \rbrace.
\] 
It is clear that $ \delta_2\delta_1\delta_2A_2$ and  $\delta_1A_1$ are involutions.
It follows from 
\begin{itemize}
	\item $A_1=\delta_1(\delta_1A_1)$ and
	\item $A_2=( \delta_2\delta_1\delta_2)(\delta_2\delta_1\delta_2A_2)$
\end{itemize}
that the elements $A_1$ and $A_2$ are in $K$. Also,
It follows from 
\begin{itemize}
	\item $\delta_2(a_1)=b_1$ 
	\item $\delta_2\delta_1(b_i)=c_i$ for $i=1,2,3$ and
	\item $\delta_2\delta_1(c_i)=b_{i+1}$ for $i=1,2$
\end{itemize}
that $B_i,C_i$ are contained in $K$ for $i=1,2,3$. Moreover,
since
\begin{itemize}
	\item $\delta_3(c_3)=d_4$, 
	\item $\delta_1\delta_2\delta_3\delta_1\delta_3(c_1)=b_4$ and
	\item $A_1\delta_3(a_1)=e$
\end{itemize}
then the elements $D_4, B_4$
and $E$ are in $K$. We conclude that $K=\mathcal{T}_{10}$ by Theorem~\ref{thm1}.
\end{proof}
We consider the models for the surface $N_{8}$, where $8$-crosscaps are 
distributed on the sphere as in Figure~\ref{L1}, ~\ref{L2} and ~\ref{L3}.  There are 
reflections, $\lambda_1,\lambda_2$ and $\lambda_3$, of the surface $N_{8}$ in the $xy$-plane such that 
\begin{itemize}
	\item $\lambda_1(x_i)=x_{i}$ if $i=7,8$, \\
	$\lambda_1(x_i)=x_{i+1}$ if $i=1,4$ and
	 $\lambda_1(x_3)=x_{6}$,
	\item $\lambda_2(x_i)=x_{i}$ if $i=2,5$,\\
	 $\lambda_2(x_1)=x_{3}$, $\lambda_2(x_4)=x_{6}$,  $\lambda_2(x_7)=x_{8}$, and
	\item $\lambda_3(x_i)=x_{i}$ if $i=1,4$,\\
	 $\lambda_3(x_2)=x_{3}$, $\lambda_3(x_5)=x_{8}$ and $\lambda_3(x_6)=x_{7}$.
\end{itemize}
  Note that the involutions $\lambda_i$ reverse the orientation of a neighbourhood of a two-sided simple closed curve for $i=1,2,3$. Since $D(\delta_i)=1$, the involutions $\delta_i$ are contained in $\mathcal{T}_{8}$ for $i=1,2,3$.\\

\begin{figure}[h]
\begin{center}
\scalebox{0.4}{\includegraphics{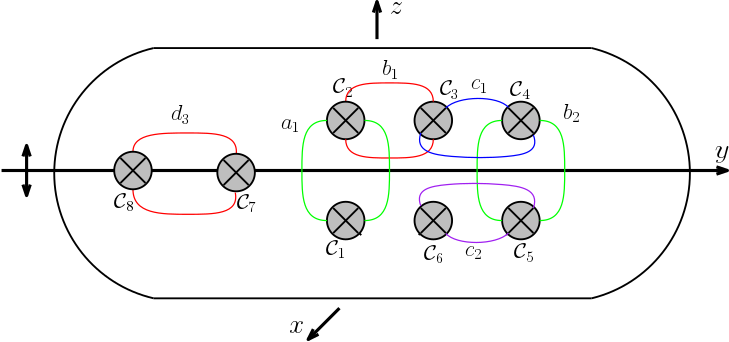}}
\caption{The involution $\lambda_1$ for $g=8$.}
\label{L1}
\end{center}
\end{figure}
\begin{figure}[h]
\begin{center}
\scalebox{0.4}{\includegraphics{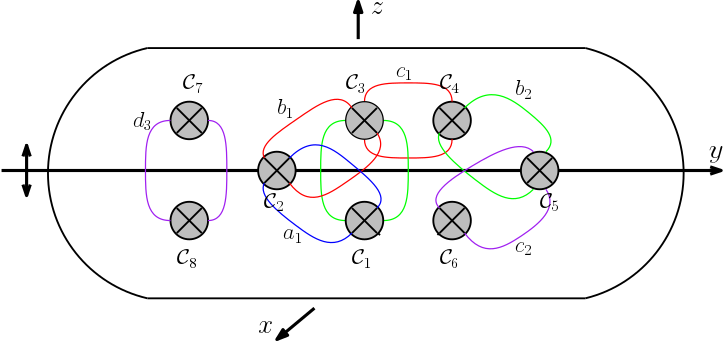}}
\caption{The involution $\lambda_2$ for $g=8$.}
\label{L2}
\end{center}
\end{figure}
\begin{figure}[h]
\begin{center}
\scalebox{0.4}{\includegraphics{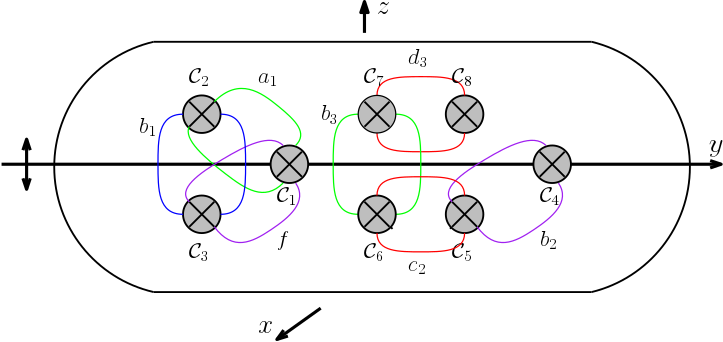}}
\caption{The involution $\lambda_3$ for $g=8$.}
\label{L3}
\end{center}
\end{figure}
\begin{theorem}
The twist subgroup $\mathcal{T}_{8}$ is generated by five involutions $\lambda_1,\lambda_2, \lambda_2\lambda_1\lambda_2A_2, \lambda_1A_1$ and $\lambda_3$. 
\end{theorem}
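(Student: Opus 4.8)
The plan is to imitate, step for step, the argument just given for $\mathcal{T}_{10}$, with the reflections $\lambda_1,\lambda_2,\lambda_3$ in the roles of $\delta_1,\delta_2,\delta_3$ and with Omori's generating set invoked for $g=8$, that is $r=3$. By Theorem~\ref{thm1}(2) it suffices to show that the subgroup $K$ of $\mathcal{T}_8$ generated by $\{\lambda_1,\lambda_2,\lambda_2\lambda_1\lambda_2A_2,\lambda_1A_1,\lambda_3\}$ contains $A_1,A_2,B_1,B_2,B_3,C_1,C_2,D_3$ and $E$. Note first that $K\leq\mathcal{T}_8$: indeed $\lambda_1,\lambda_2,\lambda_3\in\mathcal{T}_8$ since $D(\lambda_i)=1$, and $A_1,A_2$ are Dehn twists about two-sided simple closed curves.

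First I would check that $\lambda_1A_1$ and $\lambda_2\lambda_1\lambda_2A_2$ are involutions. Reading off Figure~\ref{L1} that $\lambda_1(a_1)=a_1$, and off Figures~\ref{L1} and~\ref{L2} that $(\lambda_2\lambda_1\lambda_2)(a_2)=a_2$, and using that each $\lambda_i$---hence also the involution $\lambda_2\lambda_1\lambda_2$---reverses the orientation of a regular neighbourhood of a two-sided simple closed curve, the Conjugation property yields $\lambda_1A_1\lambda_1=A_1^{-1}$ and $(\lambda_2\lambda_1\lambda_2)A_2(\lambda_2\lambda_1\lambda_2)=A_2^{-1}$, so both products square to the identity. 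Consequently $A_1=\lambda_1(\lambda_1A_1)\in K$ and $A_2=(\lambda_2\lambda_1\lambda_2)(\lambda_2\lambda_1\lambda_2A_2)\in K$.

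Next I would run the chain of conjugations exactly as in the $\mathcal{T}_{10}$ proof. From Figures~\ref{L1} and~\ref{L2} one reads $\lambda_2(a_1)=b_1$, $\lambda_2\lambda_1(b_i)=c_i$ for $i=1,2$, and $\lambda_2\lambda_1(c_i)=b_{i+1}$ for $i=1,2$; conjugating the twists already known to lie in $K$ along $b_1\mapsto c_1\mapsto b_2\mapsto c_2\mapsto b_3$ then places $B_1,C_1,B_2,C_2$ and $B_3$ in $K$. Finally, from Figure~\ref{L3} one reads $\lambda_3(c_2)=d_3$, giving $D_3\in K$, and $(A_1\lambda_3)(a_1)=e$, giving $E\in K$, each with a suitable choice of orientations on the relevant regular neighbourhoods. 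Every generator in Theorem~\ref{thm1}(2) for $g=8$ now lies in $K$, so $K=\mathcal{T}_8$.

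The real content is geometric bookkeeping: verifying from Figures~\ref{L1}--\ref{L3} the precise images of $a_1,a_2$, the $b_i$ and the $c_i$ under $\lambda_1$, $\lambda_2$, $\lambda_2\lambda_1$, $\lambda_3$ and $A_1\lambda_3$, and fixing orientations of regular neighbourhoods so that each conjugation returns a Dehn twist one already controls (the sign of the exponent being immaterial for generation). I expect the point requiring the most care is the verification that $(\lambda_2\lambda_1\lambda_2)(a_2)=a_2$ and that this triple composite is orientation-reversing on a neighbourhood of $a_2$; the rest is a direct transcription of the argument for $\mathcal{T}_{10}$.
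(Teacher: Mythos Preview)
Your overall strategy matches the paper's, but the transcription from the $\mathcal{T}_{10}$ argument breaks down at exactly one place: obtaining $B_3$. You assert $\lambda_2\lambda_1(c_i)=b_{i+1}$ for $i=1,2$, but the paper only records this for $i=1$. Tracking the action on crosscaps from the data given for $\lambda_1,\lambda_2$ (namely $\lambda_1$ induces $x_1\!\leftrightarrow\! x_2$, $x_4\!\leftrightarrow\! x_5$, $x_3\!\leftrightarrow\! x_6$, fixing $x_7,x_8$, and $\lambda_2$ induces $x_1\!\leftrightarrow\! x_3$, $x_4\!\leftrightarrow\! x_6$, $x_7\!\leftrightarrow\! x_8$, fixing $x_2,x_5$), the composite $\lambda_2\lambda_1$ cycles $x_1\to x_2\to x_3\to x_4\to x_5\to x_6\to x_1$ and swaps $x_7\leftrightarrow x_8$. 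Thus $\lambda_2\lambda_1$ sends $c_2$ (which runs through crosscaps $5$ and $6$) to a curve through crosscaps $6$ and $1$, not to $b_3$; the chain wraps around after six steps in $N_8$, unlike the longer chain in $N_{10}$. The paper instead extracts $B_3$ via the longer word $\lambda_1\lambda_2\lambda_3\lambda_1\lambda_3(c_1)=b_3$, which genuinely requires $\lambda_3$.

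A second, minor point: you write $\lambda_2(a_1)=b_1$ (copying the $\delta_2(a_1)=b_1$ line from the $\mathcal{T}_{10}$ proof), whereas the paper records $\lambda_2\lambda_1(a_1)=b_1$. Since $\lambda_1(a_1)=a_1$ these are equivalent here, but it is worth being precise about which figure you are actually reading. Everything else in your outline---the verification that $\lambda_1A_1$ and $\lambda_2\lambda_1\lambda_2A_2$ are involutions, and the steps producing $A_1,A_2,B_1,C_1,B_2,C_2,D_3,E$---agrees with the paper's proof.
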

\begin{proof}
Let $K$ be the subgroup of $\mathcal{T}_{8}$ generated by the set
\[
\lbrace \lambda_1,\lambda_2, \lambda_2\lambda_1\lambda_2A_2, \lambda_1A_1,\lambda_3 \rbrace.
\] 
It is clear that $ \lambda_2\lambda_1\lambda_2A_2$ and  $\lambda_1A_1$ are involutions.
It follows from 
\begin{itemize}
	\item $A_1=\lambda_1(\lambda_1A_1)$ and
	\item $A_2=( \lambda_2\lambda_1\lambda_2)(\lambda_2\lambda_1\lambda_2A_2)$
\end{itemize}
that the elements $A_1$ and $A_2$ are in $K$. Also,
It follows from 
\begin{itemize}
	\item $\lambda_2\lambda_1(a_1)=b_1$, 
	\item $\lambda_2\lambda_1(b_i)=c_i$ for $i=1,2$, 
	\item $\lambda_2\lambda_1(c_1)=b_{2}$,
   \item $\lambda_3(c_2)=d_3$, 
	\item $\lambda_1\lambda_2\lambda_3\lambda_1\lambda_3(c_1)=b_3$ and
	\item $A_1\lambda_3(a_1)=e$
\end{itemize}
that all generators of $\mathcal{T}_{8}$ given in Theorem~\ref{thm1} are contained in $K$. This completes the proof.
\end{proof}
\begin{figure}[h]
\begin{center}
\scalebox{0.38}{\includegraphics{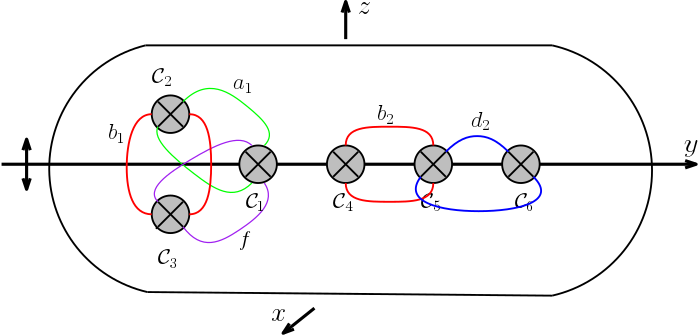}}
\caption{The involution $\xi_1$ for $g=6$.}
\label{X1}
\end{center}
\end{figure}
\begin{figure}[h]
\begin{center}
\scalebox{0.38}{\includegraphics{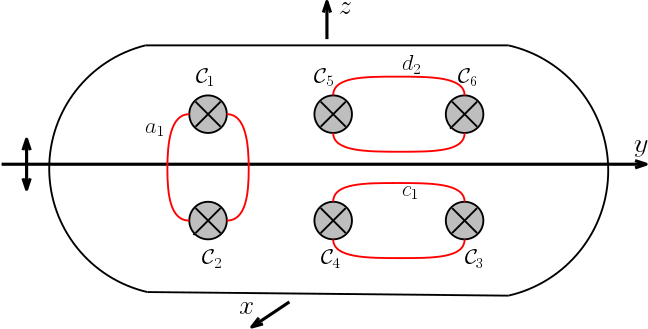}}
\caption{The involution $\xi_2$ for $g=6$.}
\label{X2}
\end{center}
\end{figure}

We consider the models for the surface $N_{6}$, where $6$-crosscaps are 
distributed on the sphere as in Figure~\ref{D1},~\ref{D2},~\ref{X1} and ~\ref{X2}. There are 
reflections $\delta_1,\delta_2, \xi_1$ and $\xi_2$ such that
\begin{itemize}
	\item $\delta_1(x_i)=x_{i+1}$ if $i=1,3,5$, 
	\item $\delta_2(x_i)=x_{i}$ if $i\neq1,3$ and
	 $\delta_2(x_1)=x_{3}$,
	\item $\xi_1(x_i)=x_{i}$ if $i\neq2,3$ and $\xi_1(x_2)=x_3$ and 
	\item $\xi_2(x_i)=x_{i+1}$ if $i=1,4$ and $\xi_2(x_3)=x_6$.
\end{itemize}
Note that the involutions $\delta_i$ and $\xi_i$ reverse the orientation of a neighbourhood of a two-sided simple closed curve for $i=1,2$. We obtain that $D(\delta_i)=D(\xi_i)=1$,  the twist subgroup $\mathcal{T}_{8}$ contains the involutions $\delta_i$ and $\xi_i$ for $i=1,2$.
\begin{theorem}
The twist subgroup $\mathcal{T}_{6}$ is generated by six involutions $\delta_1,\delta_2, \delta_2\delta_1\delta_2A_2, \delta_1A_1,\xi_1$ and $\xi_2$. 
\end{theorem}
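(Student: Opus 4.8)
The plan is to run the same argument used for the preceding cases $g=8$ and $g=10$. Let $K$ denote the subgroup of $\mathcal{T}_6$ generated by $\{\delta_1,\delta_2,\delta_2\delta_1\delta_2A_2,\delta_1A_1,\xi_1,\xi_2\}$. First I would record that $\delta_2\delta_1\delta_2A_2$ and $\delta_1A_1$ really are involutions: since $\delta_1$ fixes $a_1$ and reverses the orientation of a regular neighbourhood of it, the conjugation rule gives $\delta_1A_1\delta_1=A_1^{-1}$, so $(\delta_1A_1)^2=1$; likewise $\delta_2\delta_1\delta_2$ is an involution that fixes $a_2$ and reverses a neighbourhood of it, so $(\delta_2\delta_1\delta_2)A_2(\delta_2\delta_1\delta_2)=A_2^{-1}$ and $(\delta_2\delta_1\delta_2A_2)^2=1$. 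Then $A_1=\delta_1(\delta_1A_1)\in K$ and $A_2=(\delta_2\delta_1\delta_2)(\delta_2\delta_1\delta_2A_2)\in K$, and the transport elements $\delta_2\delta_1$, $\delta_2\delta_1\delta_2$, and the like all lie in $K$ by construction.

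Since $g=6$ corresponds to $r=2$, Theorem~\ref{thm1} tells us it is enough to show $A_1,A_2,B_1,B_2,C_1,D_2,E\in K$. For $B_1,C_1,B_2$ I would argue exactly as in the $g=8,10$ cases: reading off the figures, $\delta_2\delta_1$ carries $a_1\mapsto b_1\mapsto c_1\mapsto b_2$, so conjugating the element $A_1\in K$ successively by $\delta_2\delta_1$ produces $B_1^{\pm1},C_1^{\pm1},B_2^{\pm1}$ in $K$, whence $B_1,B_2,C_1\in K$ (the orientation signs are irrelevant since a subgroup is closed under inverses). It then remains to realise $D_2$ and $E$, and this is precisely the role of the two extra reflections $\xi_1$ and $\xi_2$: from Figures~\ref{X1} and~\ref{X2} one checks that a short word in $\xi_1,\xi_2,\delta_1,\delta_2$ sends one of the curves already available (most plausibly $c_1$ or $b_2$) to $d_2$, putting $D_2$ in $K$, and that an analogue of the identity $A_1\delta_3(a_1)=e$ used for $g=10$, namely a composition involving $A_1$ and a $\xi_i$, sends $a_1$ to $e$, putting $E$ in $K$. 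With all seven generators of Theorem~\ref{thm1} accounted for, one concludes $K=\mathcal{T}_6$.

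The conceptual content is identical to the two preceding theorems; the only genuine work, and the step I expect to be the main obstacle, is this last one: determining which specific curves the reflections $\xi_1,\xi_2$ (as read off from the crosscap configuration in Figures~\ref{D1},~\ref{D2},~\ref{X1},~\ref{X2}) send to $d_2$ and to $e$, and exhibiting the concrete words in $\delta_1,\delta_2,\xi_1,\xi_2,A_1$ that realise them. One should also verify at the outset that $D(\xi_1)=D(\xi_2)=1$, as asserted in the discussion preceding the statement, since this is what guarantees $\xi_1,\xi_2\in\mathcal{T}_6$ and hence that $K$ is legitimately a subgroup of $\mathcal{T}_6$. No new phenomenon appears for $g=6$; the point of the theorem is merely that six involutions still suffice at this small genus, where the more economical constructions of the earlier theorems no longer apply.
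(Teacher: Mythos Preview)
Your overall strategy matches the paper's proof exactly: show $A_1,A_2\in K$, then transport along the chain by conjugation to obtain the remaining Omori generators. There is, however, one concrete error in your transport step. You assert that $\delta_2\delta_1$ carries $a_1\mapsto b_1\mapsto c_1\mapsto b_2$. The first two arrows are fine, but for $g=6$ the map $\delta_2\delta_1$ acts on the crosscaps $x_1,x_2,x_3,x_4$ as a $4$-cycle and swaps $x_5\leftrightarrow x_6$; consequently $\delta_2\delta_1(c_1)$ is a curve encircling $x_4$ and $x_1$, not $b_2$ (which encircles $x_4$ and $x_5$). In other words the orbit of $a_1$ under $\delta_2\delta_1$ closes up before it reaches $b_2$. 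This is precisely the ``new phenomenon'' at $g=6$ that you claim does not occur.

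The paper handles this by using $\xi_2$ not only for $D_2$ but already for $B_2$: it records $\delta_1\delta_2\xi_2(b_1)=b_2$, and then $\xi_2(c_1)=d_2$ and $A_1\xi_1(a_1)=e$ finish the list. So $\xi_2$ is doing double duty --- it is needed to escape the $\langle\delta_1,\delta_2\rangle$-orbit and reach the second handle, as well as to produce $d_2$. Once you adjust your proposal to route $b_2$ through $\xi_2$, the rest of your outline is correct and coincides with the paper's argument.
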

\begin{proof}
Let $K$ be the subgroup of $\mathcal{T}_{6}$ generated by the set
\[
\lbrace \delta_1,\delta_2, \delta_2\delta_1\delta_2A_2, \delta_1A_1,\xi_1,\xi_2 \rbrace.
\] 
It is clear that $ \delta_2\delta_1\delta_2A_2$ and  $\delta_1A_1$ are involutions.
It follows from 
\begin{itemize}
	\item $A_1=\delta_1(\delta_1A_1)$ and
	\item $A_2=( \delta_2\delta_1\delta_2)(\delta_2\delta_1\delta_2A_2)$
\end{itemize}
that the elements $A_1$ and $A_2$ are in $K$. Also,
It follows from 
\begin{itemize}
	\item $\delta_2(a_1)=b_1$, 
	\item $\delta_2\delta_1(b_1)=c_1$, 
	\item $\delta_1\delta_2\xi_2(b_1)=b_2$,
	\item$\xi_2(c_1)=d_2$ and
	\item $A_1\xi_1(a_1)=e$
\end{itemize}
that all generators of $\mathcal{T}_{6}$ given in Theorem~\ref{thm1} are contained in $K$. This completes the proof.
\end{proof}

\section{The odd case}\label{S4}
For $g=4k+1$, we work with two models for $N_g$: one is on the left hand side of Figure~\ref{SO}, the other one is depicted in Figure~\ref{CODD}. 
\begin{figure}[h]
\begin{center}
\scalebox{0.4}{\includegraphics{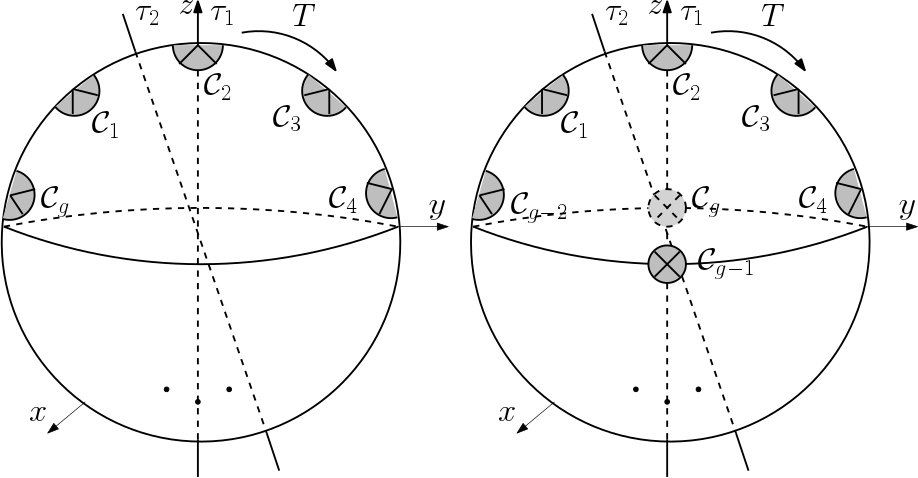}}
\caption{The involutions $\tau_1$ and $\tau_2$.}
\label{SO}
\end{center}
\end{figure}
\begin{figure}[h]
\begin{center}
\scalebox{0.3}{\includegraphics{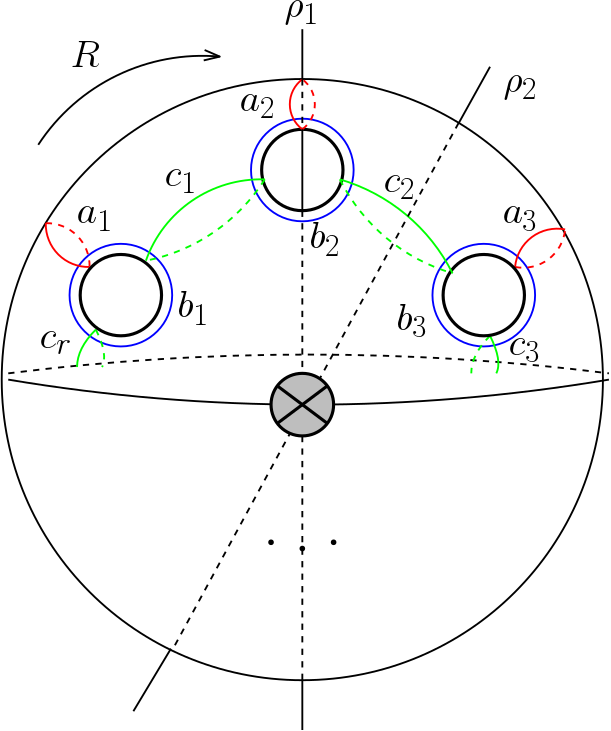}}
\caption{The involutions $\rho_1$ and $\rho_2$ for $g=2r+1$.}
\label{CODD}
\end{center}
\end{figure}
The model in Figure~\ref{SO} is the nonorientable surface obtained from $\mathbb{S}^{2}$ embedded in $\mathbb{R}^{3}$ 
and by deleting the interiors of $g$-disjoint disks and identifying the antipodal points on 
the boundary of each removed disks, say $\mathcal{C}_i$. Moreover, each crosscap 
$\mathcal{C}_i$ is in a circular position with the second crosscap $\mathcal{C}_2$ 
on the $+z$-axis and the rotation $T$ by $\frac{2\pi}{g}$ about $x$-axis maps 
the crosscap $C_i$ to $C_{i+1}$. The model in Figure~\ref{CODD} is obtained
 from a genus $r$ orientable surface by deleting the interior of a disk and identifying the antipodal points on the boundary. Moreover, the genus $r$ 
surface minus a disk is embedded in $\mathbb{R}^{3}$ in such a way that each genus 
is in a circular position with the second genus on the $+z$-axis and the rotation $R$ by
$\frac{2\pi}{r}$ about $x$-axis maps the curve $b_i$ to $b_{i+1}$ for
$i=1,\ldots,r-1$ and $b_r$ to $b_1$.\\
\noindent
We use the explicit homeomorphism constructed in \cite[Section 3]{st1} to identify the models in Figure~\ref{G} and Figure~\ref{CODD}. In Figure~\ref{CODD},
one crosscap is on the $+x$-axis. Note that the surface $N_g$ is invariant under the two involutions $\rho_{1}$ and $\rho_{2}$ 
where $\rho_{1}$ is the reflection in the $xz$-plane and $\rho_{2}$ is the reflection in the plane $z=tan(\frac{\pi}{r})y$ as in Figure~\ref{CODD}. The rotations $\rho_{1}$ and $\rho_{2}$ 
satisfy $D(\rho_{1})=D(\rho_{2})=1$ if $g=4k+1$. In this case, the twist subgroup $\mathcal{T}_g$ contains $\rho_{1}$ and $\rho_{2}$. Observe that the rotation $R=\rho_2\rho_1$.

For $g=4k+3$, we work with the model on the right hand side of Figure~\ref{SO}. This surface is a genus-$g$ 
nonorientable surface obtained from $\mathbb{S}^{2}$ embedded in $\mathbb{R}^{3}$ 
and by deleting the interiors of $g$-disjoint disks and identifying the antipodal points on 
the boundary of each removed disks, say $\mathcal{C}_i$. Moreover, each crosscap 
$\mathcal{C}_i$ for $i=1,\ldots,g-2$ is in a circular position with the second crosscap $\mathcal{C}_2$ 
on the $+z$-axis, the rotation $T$ by $\frac{2\pi}{g-2}$ about $x$-axis maps 
the crosscap $\mathcal{C}_i$ to $\mathcal{C}_{i+1}$ for $i=1,\ldots,g-3$. The crosscap  $\mathcal{C}_{g-1}$ is on the $+x$-axis and  $\mathcal{C}_g$ is obtained by rotating  $\mathcal{C}_{g-1}$ by $\pi$ about $+z$-axis. Note that the surface $N_g$ is invariant under the two reflections $\tau_1$ and 
$\tau_2$ where $\tau_1$ is the reflection in the $z$-axis and $\tau_2$ is the 
reflection in the plane  $z=tan(\frac{\pi}{r})y$ as in Figure~\ref{SO}. The 
reflections $\tau_1$ and $\tau_2$ satisfy $D(\tau_1)=D(\tau_2)=1$ if $r$ is 
even, which implies that  $\tau_1$ and $\tau_2$ are contained in the twist 
subgroup $\mathcal{T}_g$.\\
\noindent
Recall that in Theorem~\ref{t1} we give a generating set for $\mathcal{T}_g$ when $g$ is even. We have the following generators when $g$ is odd.
\begin{theorem}\label{t2}
Let $r\geq3$ and $g=2r+1$. Then the twist subgroup $\mathcal{T}_g$ is generated by 
the elements $R, A_1A_{2}^{-1}, B_1B_{2}^{-1}, C_1C_{2}^{-1}$ and $E$. 
\end{theorem}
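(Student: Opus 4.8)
The plan is to mirror the proof of Theorem~\ref{t1}, replacing the even-genus generating set of Theorem~\ref{thm1}(2) with the odd-genus one in Theorem~\ref{thm1}(1), and using Theorem~\ref{mt1} (Korkmaz's four-element generating set for $\mod(\Sigma_r)$) as the engine. Write $g=2r+1$ and let $G$ be the subgroup of $\mathcal{T}_g$ generated by $\{R,\,A_1A_2^{-1},\,B_1B_2^{-1},\,C_1C_2^{-1},\,E\}$. The goal is to show $G=\mathcal{T}_g$, and by Theorem~\ref{thm1}(1) it suffices to produce the Dehn twists $A_1,A_2,B_1,\ldots,B_r,C_1,\ldots,C_{r-1}$ and $E$ inside $G$ (the twist $E$ is a generator of $G$ by definition, so the real work is in the $A_i$, $B_i$, $C_j$).

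First I would set up exactly the auxiliary machinery used for Theorem~\ref{t1}: let $\mathcal{S}$ be the set of isotopy classes of two-sided nonseparating simple closed curves on $N_g$, and define $\mathcal{G}=\{(a,b)\in\mathcal{S}\times\mathcal{S}: AB^{-1}\in G\}$. One checks this is a $G$-invariant equivalence relation on $\mathcal{S}$ (symmetry from $(AB^{-1})^{-1}=BA^{-1}$, transitivity from $AC^{-1}=(AB^{-1})(BC^{-1})$, and $G$-invariance from the conjugation formula $HAH^{-1}=(H(a))^{\pm1}$ for $H\in G$, since $H$ is a product of Dehn twists and hence orientation-preserving on curve neighbourhoods in the relevant sense). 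The point is that the curves $a_i$, $b_i$, $c_i$ in Figure~\ref{G}/\ref{CODD} are literally the same curves, and the rotation $R$ acts on them exactly as in the orientable model of \cite{mk1}; since $R\in G$ and $A_1A_2^{-1}, B_1B_2^{-1}, C_1C_2^{-1}\in G$, the chain of equivalences worked out in the proof of Theorem~\ref{mt1} goes through verbatim and shows $(a_1,a_i)$, $(b_1,b_i)$, $(c_1,c_j)\in\mathcal{G}$, i.e. all the twists $A_i$ lie in the same $G$-coset-difference as $A_1$, etc. Combined with the fact that $E\in G$ intertwines $f$ and $e$ (the curve $e$ satisfies $A_1(f)=e$ in the even case; in the odd case $E$ itself is already a generator), one then recovers each individual $A_i$, $B_i$, $C_j$ from the differences plus one anchor twist — and the anchor is supplied, as in \cite{mk1}, because $\mod(\Sigma_r)$ sits inside the picture and a single twist such as $B_r$ can be extracted once $R$ and the three difference elements are available. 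Once every generator of Theorem~\ref{thm1}(1) is shown to be in $G$, we conclude $G=\mathcal{T}_g$.

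The one genuinely delicate point — and the step I expect to be the main obstacle — is that the models in Figure~\ref{CODD} differ from those in Figure~\ref{C} precisely in that for odd $g=2r+1$ we delete the interior of a \emph{single} disk and identify antipodal points (rather than two disks), so there is no curve $d_r$ and no twist $D_r$ in the generating set. I must therefore check that the chain of curve-equivalences from \cite{mk1}, together with the curve $e$, genuinely suffices to reach \emph{all} of $A_1,A_2,B_1,\dots,B_r,C_1,\dots,C_{r-1},E$ without ever invoking $D_r$; this is where one has to be careful that removing $D_r$ from the target list (compared with Theorem~\ref{t1}) does not break anything. Since the orientable surface of genus $r$ with one crosscap-region behaves, as far as the curves $a_i,b_i,c_i$ and the rotation $R$ are concerned, just like the even case with the $d_r$-strand amputated, and since the proof of Theorem~\ref{mt1} never needed a $d$-type curve, this amputation is harmless — but it is exactly the place a referee would want to see spelled out. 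Everything else is a routine transcription of the even-genus argument.

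\begin{proof}
Let $g=2r+1$ and let $G$ be the subgroup of $\mathcal{T}_g$ generated by the set
\[
\lbrace R,\, A_1A_2^{-1},\, B_1B_2^{-1},\, C_1C_2^{-1},\, E\rbrace.
\]
Let $\mathcal{S}$ denote the set of isotopy classes of two-sided nonseparating simple closed curves on $N_g$, and define
\[
\mathcal{G}=\lbrace (a,b)\in\mathcal{S}\times\mathcal{S}: AB^{-1}\in G\rbrace.
\]
As in the proof of Theorem~\ref{t1}, $\mathcal{G}$ is a $G$-invariant equivalence relation on $\mathcal{S}$. Since the curves $a_i$, $b_i$ and $c_i$ in Figure~\ref{CODD} are the same as the corresponding curves in \cite{mk1}, and the rotation $R\in G$ acts on them as in the orientable model of \cite{mk1}, the argument in the proof of Theorem~\ref{mt1} applies verbatim and shows that the Dehn twists $A_1, A_2, B_1,\ldots,B_r$ and $C_1,\ldots,C_{r-1}$ are contained in $G$. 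Together with $E\in G$, all generators listed in Theorem~\ref{thm1}(1) lie in $G$, so $G=\mathcal{T}_g$.
\end{proof}
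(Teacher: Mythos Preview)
Your proof is correct and follows essentially the same approach as the paper's own argument: define $G$, set up the $G$-invariant equivalence relation $\mathcal{G}$ on two-sided nonseparating curves, invoke the proof of Theorem~\ref{mt1} to deduce that all $A_i$, $B_i$, $C_j$ lie in $G$, and then conclude via Theorem~\ref{thm1}(1). The paper's proof is virtually identical to yours, and your discussion of why dropping $D_r$ is harmless (since the odd case simply omits that generator) is a useful clarification that the paper leaves implicit.
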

\begin{proof}
Let $G$ be the subgroup of $\mathcal{T}_g$  generated by the set 
\[
\lbrace R, A_1A_{2}^{-1}, B_1B_{2}^{-1}, C_1C_{2}^{-1}, E\rbrace
\]
if $g=2r+1$.
Let $\mathcal{S}$ denote the set of isotopy classes of two-sided non-separating 
simple closed curves on $N_g$. Define a subset $\mathcal{G}$ of $\mathcal{S}\times \mathcal{S}$ 
as 
\[
\mathcal{G} =\lbrace(a,b): AB^{-1}\in G \rbrace.
\]
The set $\mathcal{G}$ defines an equivalence relation on $\mathcal{S}$ which satisfies 
$G$-invariance property, that is, 
\begin{center}
if $(a,b)\in \mathcal{G}$ and $H\in G$ then $(H(a),H(b))\in \mathcal{G}$.
\end{center}
Then it follows from the proof of Theorem~\ref{mt1} that the Dehn twists 
$A_i$ and $B_i$ for $i=1,\ldots,r$ are contained in $G$. Also, $G$ contains $C_j$ 
for $j=1,\ldots,r-1$. Since all generators given in Theorem~\ref{thm1} are contained 
in the group $G$. We conclude that $G=\mathcal{T}_g$.
\end{proof}

\begin{figure}[h]
\begin{center}
\scalebox{0.35}{\includegraphics{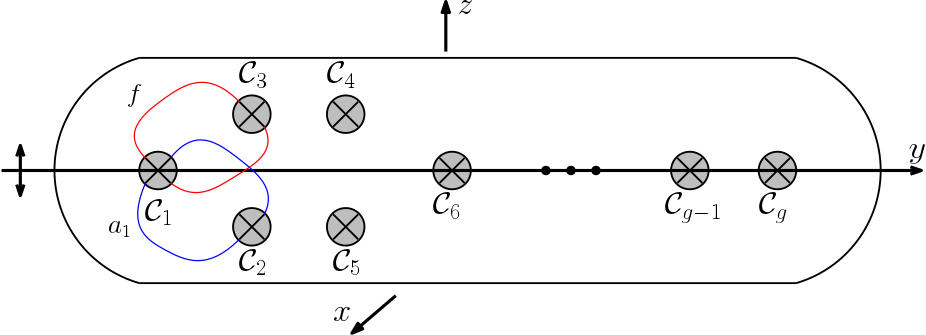}}
\caption{The involution $\beta$ for $g=2r+1$.}
\label{beta}
\end{center}
\end{figure}
Let $g=2r+1$ and consider the surface $N_g$, where $g$-crosscaps are distributed on $\mathbb{S}^2$ as in Figure~\ref{beta}. First, we introduce a reflection $\beta$ on $N_g$ in the $xy$-plane such that
\begin{itemize}
\item $\beta(a_1)=f$,
\item $\beta(x_2)=x_3$, $\beta(x_4)=x_5$ and
\item $\beta(x_1)=x_1$, $\beta(x_i)=x_i$ for $i=6,7,\ldots,g$.
\end{itemize}
The involution $\beta$ reverses the orientation of a neighbourhood of a two-sided simple closed curve. It satisfies $D(\beta)=1$ and hence $\beta$ is an element of $\mathcal{T}_g$.\\
\noindent
For the remaining generators of the following theorem we refer to Figures~\ref{CODD} and ~\ref{beta}. 
\begin{theorem}
For $g=4k+1$ and $k\geq3$, the twist subgroup $\mathcal{T}_g$ is generated by the 
four involutions  $\rho_1,\rho_2,\rho_1A_2C_{\frac{r}{2}}B_{\frac{r+4}{2}} C_{\frac{r+6}{2}}$ and $\beta$, where $r=2k$.   
\end{theorem}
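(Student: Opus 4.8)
The plan is to mimic closely the structure of the proof of Theorem~\ref{t3.5} (the case $r$ even), since the relevant configuration of curves $a_2,b_\bullet,c_\bullet$ here is exactly the one appearing there, with $r=2k$. First I would record the action of $\rho_1$ on the three curves $a_2$, $c_{r/2}$, $b_{(r+4)/2}$, $c_{(r+6)/2}$: namely $\rho_1(a_2)=a_2$, $\rho_1(b_{(r+4)/2})=b_{(r+4)/2}$, and $\rho_1$ interchanges $c_{r/2}$ and $c_{(r+6)/2}$. Since $\rho_1=\rho_1'\tau$ and $\tau$ reverses the chosen orientation of a neighbourhood of any two-sided curve, conjugation gives $\rho_1A_2\rho_1=A_2^{-1}$, $\rho_1B_{(r+4)/2}\rho_1=B_{(r+4)/2}^{-1}$, and $\rho_1C_{r/2}\rho_1=C_{(r+6)/2}^{-1}$. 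From these relations one checks that $\rho_1 H_1$ is an involution, where $H_1=A_2C_{r/2}B_{(r+4)/2}C_{(r+6)/2}$; indeed $\rho_1 H_1 \rho_1 = \rho_1 A_2\rho_1\cdot \rho_1 C_{r/2}\rho_1 \cdot \rho_1 B_{(r+4)/2}\rho_1\cdot \rho_1 C_{(r+6)/2}\rho_1 = A_2^{-1}C_{(r+6)/2}^{-1}B_{(r+4)/2}^{-1}C_{r/2}^{-1}=H_1^{-1}$, using that the four curves are pairwise disjoint so the twists commute.

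Next, let $K$ be the subgroup generated by $\{\rho_1,\rho_2,\rho_1H_1,\beta\}$. Since $R=\rho_2\rho_1\in K$ and $H_1=\rho_1(\rho_1 H_1)\in K$, and $R$ acts on all the $a_i,b_i,c_i$ curves as the rotation does, the argument of Theorem~\ref{t3.5} applies verbatim: one produces $H_2=RH_1R^{-1}$, $H_3=(H_2H_1)H_2(H_2H_1)^{-1}$, $H_4=RH_3R^{-1}$, $H_5=(H_4H_3)H_4(H_4H_3)^{-1}$, extracts $B_{(r+6)/2}C_{(r+6)/2}^{-1}=H_4H_5^{-1}$ and $C_{(r+2)/2}B_{(r+4)/2}^{-1}=H_2H_3^{-1}(C_{(r+6)/2}B_{(r+6)/2}^{-1})$, then conjugates by powers of $R$ to get all $B_iC_i^{-1}$ and $C_iB_{i+1}^{-1}$, and finally via $H_6,H_7,H_8$ obtains $B_{r/2}A_{r/2}^{-1}$, hence all $B_iA_i^{-1}$ by $R$-conjugation. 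As in Theorem~\ref{g12}, combining the differences $A_iC_i^{-1}$, $B_iC_i^{-1}$, $C_iB_{i+1}^{-1}$ yields $A_1A_2^{-1}$, $B_1B_2^{-1}$ and $C_1C_2^{-1}$, all lying in $K$.

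It then remains to produce $E$. Here the role of $\sigma$ in the even case is played by $\beta$: since $\beta(a_1)=f$ with an appropriate choice of orientations of the regular neighbourhoods, we get $\beta A_1 \beta = F^{\pm1}$, so $F\in K$ once $A_1\in K$ (and $A_1\in K$ because $A_1A_2^{-1}\in K$ together with the earlier-derived twists, exactly as in Theorem~\ref{g12}); then $A_1(f)=e$ gives $A_1 F A_1^{-1}=E^{\pm 1}\in K$. Having $R$, $A_1A_2^{-1}$, $B_1B_2^{-1}$, $C_1C_2^{-1}$ and $E$ all in $K$, Theorem~\ref{t2} forces $K=\mathcal{T}_g$. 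The only genuinely delicate points are: verifying the stated values of $\rho_1$ and $\beta$ on the curves (a picture-chasing matter, handled by Figures~\ref{CODD} and \ref{beta}), confirming the numerology $r=2k\ge 6$ so that the indices $r/2,(r+2)/2,\dots,(r+10)/2$ occurring in $H_2,\dots,H_8$ are all legitimate distinct curve labels mod $r$ — this is where the hypothesis $k\ge 3$ is used — and checking that $\beta$ indeed reverses neighbourhood orientations so that $D(\beta)=1$ and $\beta\in\mathcal{T}_g$ for $g=4k+1$. I expect the main obstacle to be purely bookkeeping: making sure the chain of $H_i$'s behaves for the smallest allowed value $r=6$ (i.e.\ $g=13$) just as it does in the generic case, with no index collisions, and that no separate small-genus argument is needed.
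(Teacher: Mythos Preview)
Your argument is essentially the same as the paper's: reduce to Theorem~\ref{t3.5} to obtain $A_1A_2^{-1}$, $B_1B_2^{-1}$, $C_1C_2^{-1}$ in the subgroup, then use $\beta$ together with $A_1$ to produce $E$, and conclude by Theorem~\ref{t2}. One small slip: in the odd case $g=2r+1$ the paper defines $\rho_1$ directly as the reflection in the $xz$-plane (see Figure~\ref{CODD}), not as $\rho_1'\tau$; your decomposition $\rho_1=\rho_1'\tau$ belongs to the even-genus model of Section~\ref{S3}. This does not affect the argument, since the only property you need---that $\rho_1$ reverses the orientation of a neighbourhood of each two-sided curve---holds for the reflection $\rho_1$ as well, and the paper invokes exactly that.
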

\begin{proof}
Consider the surface $N_{g}$ as in Figure~\ref{CODD}. The involution $\rho_1$ satisfies
\[
 \rho_1(a_2)=a_2, \rho_1(b_{\frac{r+4}{2}})=b_{\frac{r+4}{2}} \textrm{ and } \rho_1(c_{\frac{r}{2}})=c_{\frac{r+6}{2}}.
\]
Since $\rho_1$ reverses the orientation of a neighbourhood of a two-sided simple closed curve, we have
\begin{itemize}
	\item $\rho_1A_2\rho_1=A_{2}^{-1}$ 
	\item $\rho_1B_{\frac{r+4}{2}}\rho_1=B_{\frac{r+4}{2}}^{-1}$ and
	\item $\rho_1C_{\frac{r}{2}}\rho_1=C_{\frac{r+6}{2}}^{-1}$.
	\end{itemize}
 It can be 
shown that $\rho_1A_2C_{\frac{r}{2}}B_{\frac{r+4}{2}} C_{\frac{r+6}{2}}$ 
is an involution. Let $H$ be the subgroup of $\mod(N_{g})$ generated by the set
	\[
	\lbrace \rho_1,\rho_2,\rho_1A_2C_{\frac{r}{2}}B_{\frac{r+4}{2}} C_{\frac{r+6}{2}},\beta\rbrace .
	\]
Observe that $R=\rho_1\rho_2\in K$. By the proof of Theorem~\ref{t3.5}, the elements $ A_1A_{2}^{-1},B_1B_{2}^{-1}$ and $,C_1C_{2}^{-1}$ belong to $H$. Since $A_1\beta(a_1)=e$, the element $E$ is in $H$. We conclude that $\mathcal{T}_g=H$ by Theorem~\ref{t2}.
\end{proof}
Although we give a generating set of $4$ involutions, for completeness of the applications of our method, first we give the following theorem.
\begin{theorem}
For $g=4k+1$ and $k\geq1$, the twist subgroup $\mathcal{T}_g$ is generated by the 
five involutions $\tau_1$, $\tau_2$, $\tau_1\tau_2\tau_1A_2$, $\tau_2A_1$ and $\beta$.
\end{theorem}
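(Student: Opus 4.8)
The plan is to show that the subgroup $H$ generated by $\{\tau_1,\tau_2,\tau_1\tau_2\tau_1A_2,\tau_2A_1,\beta\}$ contains the generating set of Theorem~\ref{t2}, namely $R,A_1A_2^{-1},B_1B_2^{-1},C_1C_2^{-1}$ and $E$. For this I would work with the model of Figure~\ref{SO} (left hand side) and its identification with Figure~\ref{CODD} via the homeomorphism of \cite[Section 3]{st1}, where $r=2k$. First I would record that $\tau_1$ and $\tau_2$ are reflections reversing the orientation of a neighbourhood of each two-sided curve, that $D(\tau_1)=D(\tau_2)=1$ (so both lie in $\mathcal{T}_g$, as already noted), and that the rotation $T=\tau_2\tau_1$ of order $g$ belongs to $H$. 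I would check directly from the pictures that $\tau_1\tau_2\tau_1$ sends $a_2$ to $a_2$ (reversing a neighbourhood orientation) so that $(\tau_1\tau_2\tau_1)A_2(\tau_1\tau_2\tau_1)=A_2^{-1}$ and hence $\tau_1\tau_2\tau_1A_2$ is an involution; similarly $\tau_2$ fixes $a_1$ with reversed orientation, so $\tau_2A_1$ is an involution.

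Next I would extract the Dehn twists $A_1,A_2$ from the two involution products by the identities
\[
A_1=\tau_2(\tau_2A_1)\quad\text{and}\quad A_2=(\tau_1\tau_2\tau_1)(\tau_1\tau_2\tau_1A_2),
\]
which already puts $A_1,A_2\in H$. Conjugating $A_1$ by powers of the order-$g$ rotation $T\in H$ produces a whole orbit of Dehn twists $T^iA_1T^{-i}$ about the images of $a_1$ under $T^i$; the key geometric input is that these curves, together with $a_1,a_2$ and suitable conjugates, realize the curves $b_j,c_j$ appearing in the generating set of Theorem~\ref{t2}, exactly as in the $g=2r+2$ arguments of Section~\ref{S3} (Theorems~\ref{g12} and~\ref{rodd}). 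Concretely I would take products of the form $X^iA_1X^{-i}\cdot A_1^{-1}$ for appropriate $X\in\{T,R,\dots\}$ and use the standard ``$fAf^{-1}=B^{\pm1}$ when $f(a)=b$'' relation to move between the $a$-, $b$- and $c$-families, thereby showing $A_1A_2^{-1}$, $B_1B_2^{-1}$ and $C_1C_2^{-1}$ all lie in $H$. Since the curves $a_i,b_i,c_i$ here are the same as in~\cite{mk1}, I would cite the relevant computations from the even-genus proofs rather than redo them.

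Finally, the curve $e$ is recovered from $A_1\beta(a_1)=A_1(f)=e$: indeed $\beta(a_1)=f$ by the definition of $\beta$, and $A_1(f)=e$ since the Dehn twist $A_1$ carries $f$ to $e$ (this is the same fact used at the end of the proof of Theorem~\ref{g12}). Hence $\beta A_1\beta^{-1}$, conjugated appropriately, or more simply the relation $E=(A_1\beta)A_1(A_1\beta)^{-1}$ up to sign, shows $E\in H$. Having produced $R$ (equivalently $T$, and $R=\rho_1\rho_2$ is expressible through the identification of the two models), $A_1A_2^{-1}$, $B_1B_2^{-1}$, $C_1C_2^{-1}$ and $E$, Theorem~\ref{t2} gives $H=\mathcal{T}_g$.

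I expect the main obstacle to be the bookkeeping in the middle step: verifying from Figures~\ref{SO}, \ref{CODD} and~\ref{beta} precisely which curves the conjugates $T^iA_1T^{-i}$ and the reflections $\tau_1,\tau_2,\beta$ produce, and organizing the sequence of products that walks from the $a$-family to the $b$- and $c$-families while keeping track of the $\pm1$ exponents coming from orientation-reversal. This is exactly the type of argument carried out in detail for $g=12$ in Theorem~\ref{g12}; the cleanest write-up is to set up the same equivalence-relation/orbit machinery as in the proof of Theorem~\ref{t1} and then invoke, with the obvious changes of indices, the computations already done in Section~\ref{S3}.
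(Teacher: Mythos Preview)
Your extraction of $A_1$ and $A_2$ from the two involution products, and of $E$ via $A_1\beta(a_1)=e$, matches the paper exactly. The difference is in the target generating set. You aim for Theorem~\ref{t2}, whose generators are $R,A_1A_2^{-1},B_1B_2^{-1},C_1C_2^{-1},E$, whereas the paper aims for Omori's Theorem~\ref{thm1}, whose generators are the individual twists $A_1,A_2,B_i,C_i,E$. Since you already have $A_1$ and $A_2$ \emph{individually} in $H$, and since the order-$g$ rotation $T$ in the model of Figure~\ref{SO} carries $a_1$ successively through the curves $b_1,c_1,b_2,c_2,\ldots$, conjugating $A_1$ by powers of $T$ puts every $B_i$ and $C_j$ into $H$ outright. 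At that point Theorem~\ref{thm1} finishes the argument in one line, with no need for the difference elements $A_1A_2^{-1},B_1B_2^{-1},C_1C_2^{-1}$, the equivalence-relation machinery, or any of the Section~\ref{S3} computations.

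Your route through Theorem~\ref{t2} has a genuine gap: that theorem requires $R$, the rotation of order $r=2k$ in the model of Figure~\ref{CODD}, and you do not show $R\in H$. Your parenthetical ``$R$ (equivalently $T$)'' is not correct: $T$ has order $g=4k+1$ and $R$ has order $2k$, they are built from different reflection pairs ($\tau_1,\tau_2$ versus $\rho_1,\rho_2$) in different models, and there is no reason $\rho_1,\rho_2$ should lie in the subgroup generated by $\tau_1,\tau_2,\beta$ and the two twist-involutions. The fix is simply to bypass Theorem~\ref{t2} altogether and invoke Theorem~\ref{thm1}, as the paper does; then the middle ``bookkeeping'' step you were worried about disappears entirely.
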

\begin{proof}
Let $K$ be the subgroup of $\mathcal{T}_g$ generated by the set
\[
\lbrace \tau_1,\tau_2, \tau_1\tau_2\tau_1A_2,\tau_2A_1,\beta   \rbrace.
\] 
Note that the rotation $T=\tau_1\tau_2$ is contained in $K$.
It follows from 
\begin{itemize}
	\item $A_1=\tau_2\tau_2A_1$ and
	\item $A_2=(\tau_1\tau_2)\tau_1(\tau_1\tau_2\tau_1)A_2$
\end{itemize}
that the elements $A_1$ and $A_2$ are in $K$. By conjugating $A_1$ 
with powers of $T$, $\mathcal{T}_g$ contains the elements $B_i$ and 
$C_i$. Moreover, it follows from $\beta(a_1)=f$ that the element $F$ is in $K$. Since $A_1(f)=e$, we get $E\in K$. 
This finishes the proof by Theorem~\ref{thm1}.
\end{proof}
In the next theorem, we present four involutions to generate particularly $\mathcal{T}_5$ and $\mathcal{T}_9$. This completes the case $g=4k+1$ and $k\geq1$. First, recall that $A_1,A_2,B_1,B_2,C_1$ and $E$ generate $\mathcal{T}_5$ and $A_1,A_2,B_1,B_2,B_3,B_4$,
$C_1,C_2,C_3$ and $E$ generate $\mathcal{T}_9$. We use the following three involutions $\gamma,S\gamma$ and $S^{2k-2}(S\gamma)S^{2-2k}A_2$ of the generating set given in \cite[Theorem 5]{sz2}. The involution $\gamma$ is defined as the reflection in the $xz$-plane where the crosscaps are distributed along the equator on $\mathbb{S}^{2}$. The map $S$ is defined as the composition
$B_{2k}C_{2k-1}B_{2k-1}\cdots C_1B_1A_1$.  Note that $D(\gamma)=D(S\gamma)=D(S^{2k-2}(S\gamma)S^{2-2k}A_2)=1$. 
\begin{theorem}
The twist subgroups $\mathcal{T}_5$ and $\mathcal{T}_9$
can be generated by the involutions $\gamma,S\gamma, S^{2k-2}(S\gamma)S^{2-2k}A_2$ and $\beta$ for $k=1,2$.  
\end{theorem}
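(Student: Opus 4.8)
The plan is to let $K$ be the subgroup of $\mathcal{T}_g$ generated by the four involutions and to show that $K$ contains every Dehn twist generator of Theorem~\ref{thm1}; since $D(\gamma)=D(S\gamma)=D(S^{2k-2}(S\gamma)S^{2-2k}A_2)=D(\beta)=1$, we have $K\subseteq\mathcal{T}_g$ from the outset, so this yields $K=\mathcal{T}_g$. For $g=2r+1$ with $r=2k$ the generators in question are $A_1,A_2,B_1,\dots,B_r,C_1,\dots,C_{r-1}$ and $E$, i.e.\ $A_1,A_2,B_1,B_2,C_1,E$ when $g=5$ and $A_1,A_2,B_1,\dots,B_4,C_1,C_2,C_3,E$ when $g=9$.

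The first step is purely formal. Since $\gamma$ is an involution, $S=(S\gamma)\gamma\in K$, hence $S^{2k-2}\in K$, and therefore the conjugate $S^{2k-2}(S\gamma)S^{2-2k}=S^{2k-2}(S\gamma)S^{-(2k-2)}$ of $S\gamma$ also lies in $K$. Multiplying the third generator $S^{2k-2}(S\gamma)S^{2-2k}A_2$ on the left by the inverse of this conjugate yields $A_2\in K$. Thus $K$ already contains $S$, $A_2$ and $\gamma$.

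The heart of the proof is to upgrade this to all the remaining twists. Recall that $S=B_{2k}C_{2k-1}B_{2k-1}\cdots C_1B_1A_1$ is the product of Dehn twists along the chain $a_1,b_1,c_1,b_2,c_2,\dots,c_{2k-1},b_{2k}$, so conjugation by a power of $S$ moves one along this chain; moreover the reflection $\gamma$ interchanges the curves $a_1$ and $a_2$, so $\gamma A_2\gamma^{-1}=A_1^{\pm1}$ and hence $A_1\in K$. Conjugating $A_1$ by suitable powers of $S$ then produces $B_1,C_1,B_2,C_2,\dots,B_{2k}$ in turn, so that all of $A_1,A_2,B_1,\dots,B_r,C_1,\dots,C_{r-1}$ lie in $K$ --- this is precisely the purpose for which the three involutions $\gamma,S\gamma,S^{2k-2}(S\gamma)S^{2-2k}A_2$ were assembled in \cite[Theorem 5]{sz2}. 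For $g=5$ and $g=9$ the chain has only $4$, respectively $8$, curves, and I would simply carry out the handful of needed conjugations explicitly, in the same spirit as Theorem~\ref{g12}. Finally $E$ comes from $\beta$ exactly as in the earlier theorems: $\beta(a_1)=f$, with $\beta$ reversing the chosen orientation on a neighbourhood of $a_1$, gives $F=\beta A_1\beta^{-1}\in K$ (up to inverse), and $A_1(f)=e$ then gives $E=A_1FA_1^{-1}\in K$ (up to inverse). At this point all generators of Theorem~\ref{thm1} lie in $K$, so $K=\mathcal{T}_5$, respectively $K=\mathcal{T}_9$.

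The step I expect to be delicate is the bookkeeping behind the third paragraph: one must record precisely how $\gamma$ and the chain map $S$ act on $a_1,a_2,b_i,c_i$ after these curves are transported, via the homeomorphism of \cite[Section 3]{st1}, into the model in which $\gamma$ and $S$ are defined, and keep the model of Figure~\ref{beta} (on which $\beta$ is defined) compatible with all of this. Once these identifications are fixed, everything else --- extracting $S$ and $A_2$, running the iterated conjugations along the chain, and passing from $A_1$ to $F$ to $E$ --- is routine, and for the two small genera $g=5,9$ it can be checked by hand.
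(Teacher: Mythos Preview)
Your proposal is correct and follows essentially the same route as the paper: extract $S=(S\gamma)\gamma$ and $A_2$ from the given involutions, produce $A_1$, conjugate by powers of $S$ to obtain all $B_i,C_i$, and finally use $\beta$ and $A_1$ to reach $E$ via $A_1\beta(a_1)=e$. The only noteworthy difference is how $A_1$ is obtained: the paper simply invokes \cite[Theorem~5]{mk2} to express $A_1$ in terms of $S$ and $A_2$, whereas you propose the geometric route $\gamma A_2\gamma^{-1}=A_1^{\pm1}$; the citation sidesteps exactly the model-identification delicacy you flag in your final paragraph.
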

\begin{proof}
The generator $A_1$ can be obtained by $S$ and $A_2$~\cite[Theorem 5]{mk2}. By conjugating with  powers of $S$, it is easy to see that the elements $B_i$ and $C_i$ belong to $\mathcal{T}_g$. Also, the generator $E$ is contained in $\mathcal{T}_g$ since $A_1\beta(a_1)=e$.
\end{proof}
\begin{figure}[h]
\begin{center}
\scalebox{0.4}{\includegraphics{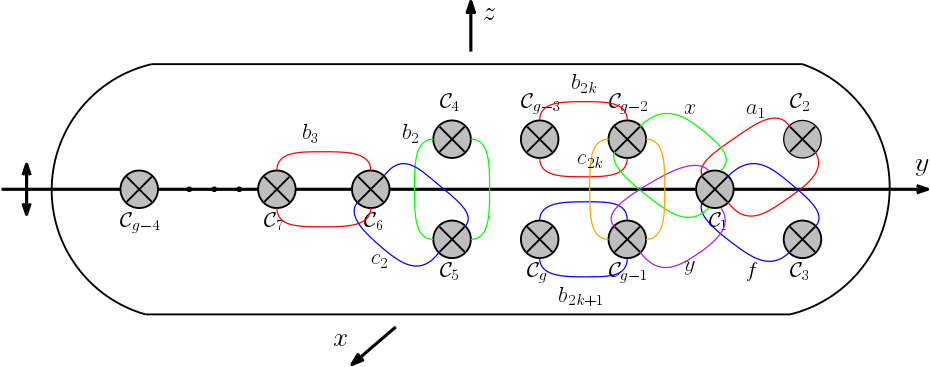}}
\caption{The involution $\mu$ for $g=4k+3$.}
\label{MU}
\end{center}
\end{figure}
Now, let $g=4k+3$ and consider $N_g$, where $g$-crosscaps are distributed over $\mathbb{S}^2$ as in Figure~\ref{MU}. The surface $N_g$ is symmetrical in the $xy$-plane. Let $\mu$ be the reflection in the $xy$-plane. Note that the linear map associated to the involution $\mu$ satisfies $D(\mu)=1$ if $k\geq2$. Therefore, the involution $\mu$ is in $\mathcal{T}_g$ for $k\geq2$.
\begin{theorem}
For $g=4k+3$ and $k\geq2$, the twist subgroup $\mathcal{T}_g$ is generated by the 
five involutions $\tau_1$, $\tau_2$, $\tau_1\tau_2\tau_1A_2$, $\tau_2A_1$ and $\mu$.
\end{theorem}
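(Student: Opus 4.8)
The plan is to follow the template of the preceding theorem (the case $g=4k+1$ generated by $\tau_1,\tau_2,\tau_1\tau_2\tau_1A_2,\tau_2A_1$ and $\beta$), now working in the model of Figure~\ref{MU} and substituting $\mu$ for $\beta$. Write $K$ for the subgroup of $\mathcal{T}_g$ generated by $\lbrace \tau_1,\tau_2,\tau_1\tau_2\tau_1A_2,\tau_2A_1,\mu\rbrace$. First I would check that $\tau_2A_1$ and $\tau_1\tau_2\tau_1A_2$ really are involutions: since $\tau_2$ fixes the curve $a_1$ and $\tau_1\tau_2\tau_1$ fixes $a_2$, each reversing the orientation of a regular neighbourhood, the conjugation rule for Dehn twists gives $\tau_2A_1\tau_2=A_1^{-1}$ and $(\tau_1\tau_2\tau_1)A_2(\tau_1\tau_2\tau_1)=A_2^{-1}$, whence $(\tau_2A_1)^{2}=(\tau_1\tau_2\tau_1A_2)^{2}=1$. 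Then $A_1=\tau_2(\tau_2A_1)\in K$ and $A_2=(\tau_1\tau_2\tau_1)(\tau_1\tau_2\tau_1A_2)\in K$, and the rotation $T=\tau_1\tau_2$ also lies in $K$.

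Next, since $T$ is the rotation by $2\pi/(g-2)$ cyclically permuting the crosscaps $\mathcal{C}_1,\ldots,\mathcal{C}_{g-2}$, conjugating $A_1$ by the powers $T^{j}$ sweeps the curve $a_1$ around this circularly arranged block. Transporting everything through the homeomorphism of \cite[Section 3]{st1} that identifies this model with the one of Figure~\ref{G}, I expect the $T$-orbit of $A_1$ together with $A_2$ to give the generators $A_1$, $A_2$ and all the twists $B_i$, $C_i$ of Theorem~\ref{thm1}(1) that are disjoint from the two remaining crosscaps $\mathcal{C}_{g-1}$ and $\mathcal{C}_g$.

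It then remains to bring into $K$ the leftover generators of Theorem~\ref{thm1}(1) --- those supported near $\mathcal{C}_{g-1}$ and $\mathcal{C}_g$ --- together with the twist $E$; for this I would use $\mu$. The reflection $\mu$ in the $xy$-plane carries a suitably chosen curve of the block (already known to lie in $K$) onto a curve meeting $\mathcal{C}_{g-1}$ and $\mathcal{C}_g$, so conjugation by $\mu$, and by words in $T$ and $\mu$, pulls the remaining twists into $K$; and, just as in the $g=4k+1$ case, the identity $A_1\mu(a_1)=e$ gives $E\in K$. Since $K$ then contains the whole generating set of Theorem~\ref{thm1}(1), we conclude $K=\mathcal{T}_g$. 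The hypothesis $k\geq2$ enters only through $D(\mu)=1$, which is what places $\mu$ in $\mathcal{T}_g$ by Lemma~\ref{lem1}. The hard part will be the curve bookkeeping: checking in the model of Figure~\ref{MU} that the $T$-orbit of $A_1$ plus the single extra reflection $\mu$ really hits every Dehn twist generator of Theorem~\ref{thm1}(1), and verifying the curve identities used along the way, in particular $A_1\mu(a_1)=e$ and the image under $\mu$ of the chosen curve of the block.
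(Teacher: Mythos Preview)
Your outline matches the paper's proof almost exactly: define $K$, extract $A_1,A_2$ and $T=\tau_1\tau_2$, then conjugate $A_1$ by powers of $T$ to obtain $B_1,\ldots,B_{2k}$ and $C_1,\ldots,C_{2k-1}$ (here $r=2k+1$, and the $g-2$ crosscaps in circular position account for exactly these twists). For the two missing twists near $\mathcal{C}_{g-1},\mathcal{C}_g$ the paper is more specific than you are: it uses $\mu(b_{2k})=b_{2k+1}$ to get $B_{2k+1}$, and the chain $x:=T(b_{2k})$, $y:=\mu(x)$, $T^{-1}(y)=c_{2k}$ to get $C_{2k}$. So your ``words in $T$ and $\mu$'' are precisely $\mu$ and $T^{-1}\mu T$ applied to $b_{2k}$.

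One point of divergence: your proposed identity $A_1\mu(a_1)=e$ is lifted from the $\beta$ case and is not something the paper asserts for $\mu$; the configuration in Figure~\ref{MU} is not the same as in Figure~\ref{beta}, so you should not assume $\mu(a_1)=f$. In fact the paper's own proof simply stops after obtaining $B_{2k+1}$ and $C_{2k}$ and invokes Theorem~\ref{thm1}, without ever exhibiting $E\in K$. Your instinct that $E$ still needs to be produced is correct, but the identity you propose for it requires independent verification in the model of Figure~\ref{MU}; do not take it for granted.
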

\begin{proof}
Let $K$ be the subgroup of $\mathcal{T}_g$ generated by the set
\[
\lbrace \tau_1,\tau_2, \tau_1\tau_2\tau_1A_2,\tau_2A_1,\mu \rbrace.
\] 
Note that the rotation $T=\tau_1\tau_2$ is contained in $K$.
It follows from 
\begin{itemize}
	\item $A_1=\tau_2(\tau_2A_1)$ and
	\item $A_2=(\tau_1\tau_2\tau_1)(\tau_1\tau_2\tau_1A_2)$
\end{itemize}
that the elements $A_1$ and $A_2$ are in $K$. By conjugating $A_1$ 
with powers of $T$, $\mathcal{T}_g$ contains the elements $B_i$ for $i=1,\ldots, 2k$ and 
$C_j$ for $j=1,\ldots, 2k-1$. \\
\noindent
Let $T(b_{2k})=x$ and $\mu(x)=y$. Then the elements $X$
and $Y$ are contained in $K$ by the fact that $B_{2k}$ is in $K$.\\
\noindent
It follows from 
\begin{itemize}
\item $T^{-1}(y)=c_{2k}$,
\item $\mu(b_{2k})=b_{2k+1}$
\end{itemize}
that $C_{2k}$ and $B_{2k+1}$ are contained in $K$. This completes the proof by Theorem~\ref{thm1}.
\end{proof}
\begin{figure}[h]
\begin{center}
\scalebox{0.4}{\includegraphics{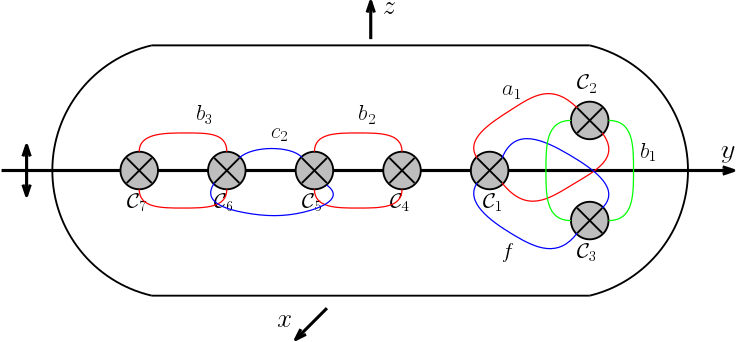}}
\caption{The involution $\sigma_1$ for $g=7$.}
\label{SP}
\end{center}
\end{figure}
\begin{figure}[h]
\begin{center}
\scalebox{0.4}{\includegraphics{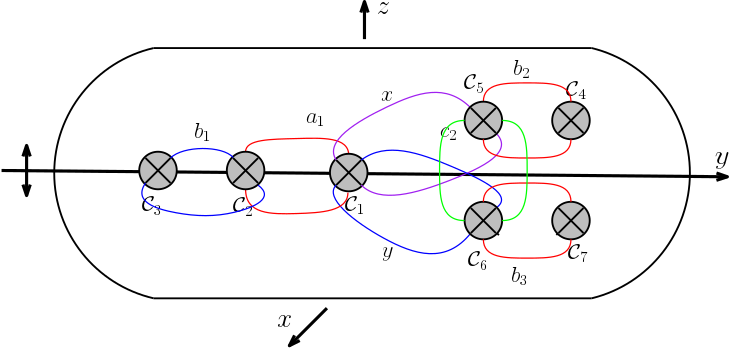}}
\caption{The involution $\sigma_2$ for $g=7$.}
\label{SDP}
\end{center}
\end{figure}
For the surface $N_7$, we introduce two involutions, $\sigma_1$ and $\sigma_2$, shown in Figure~\ref{SP} and ~\ref{SDP}. In these figures, the surface is symmetric with respect to the $xy$-plane. Both $\sigma_1$ and $\sigma_2$
are reflections in the $xy$-plane and $D(\sigma_1)=D(\sigma_2)=1$. Hence, both $\sigma_1$ and $\sigma_2$ belong to $\mathcal{T}_7$. For the remaining generators in the following theorem we refer to the model on the right hand side of Figure~\ref{SO}.
\begin{theorem}
The twist subgroup $\mathcal{T}_7$ is generated by the 
six involutions $\tau_1$, $\tau_2$, $\tau_1\tau_2\tau_1A_2$, $\tau_2A_1$, $\sigma_1$ and $\sigma_2$.
\end{theorem}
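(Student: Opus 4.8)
The plan is to follow the same template used for $\mathcal{T}_8$ and $\mathcal{T}_{10}$, where two of the involutions are reflections of the surface whose product is the order-$(g-2)$ rotation $T$, and the remaining involutions are obtained by pre- or post-composing reflections with single Dehn twists so as to produce $A_1$ and $A_2$; conjugating these by powers of $T$ then spreads out the chain generators $B_i,C_i$, and the last two reflections are used to pick up the handful of remaining generators ($E$, together with the $D$-type and extra $B$-type curves near the two special crosscaps $\mathcal{C}_{g-1},\mathcal{C}_g$). Concretely, for $g=7$ we have $r=\frac{g-2+1}{2}$-style indexing with $g-2=5$, so $T$ has order $5$ and $\tau_1\tau_2=T$; the first step is to record that $T\in K$ where $K=\langle \tau_1,\tau_2,\tau_1\tau_2\tau_1 A_2,\tau_2 A_1,\sigma_1,\sigma_2\rangle$.

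The second step is to extract the Dehn twists $A_1$ and $A_2$ exactly as before: from $A_1=\tau_2(\tau_2 A_1)$ and $A_2=(\tau_1\tau_2\tau_1)(\tau_1\tau_2\tau_1 A_2)$ we get $A_1,A_2\in K$ (using that $\tau_1,\tau_2$ reverse orientation on a neighbourhood of the relevant curves so that the bracketed products are genuine involutions, which forces the specified conjugates of $A_i$ to be the correct single twists). The third step is to conjugate $A_1$ by the powers $T^j$, $j=0,\dots,4$, and observe that $T$ cyclically permutes the first five crosscaps, hence cyclically permutes the curves $a_1,b_1,c_1,b_2,c_2,\dots$ along the chain; this shows $B_1,\dots,B_3$ and $C_1,\dots,C_2$ lie in $K$. (One must check the precise correspondence between the crosscap rotation on the Figure~\ref{SO} model and the chain curves on the Figure~\ref{G}/right-hand Figure~\ref{SO} model, using the identification of \cite[Section 3]{st1}, but this is routine bookkeeping.)

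The fourth and final step is to use $\sigma_1$ and $\sigma_2$ to obtain the remaining generators from Theorem~\ref{thm1}, which for $g=7$ (so $r=3$) are $A_1,A_2,B_1,B_2,B_3,C_1,C_2$ and $E$. Since $B_3$ (the twist about the curve encircling the last two special crosscaps $\mathcal{C}_6,\mathcal{C}_7$) may not already be among the $T$-orbit of $A_1$, one reads off from the pictures a relation such as $\sigma_i$ sending some curve already in $K$ to $b_3$, and similarly $A_1\sigma_1(a_1)=e$ (mimicking $A_1\mu(a_1)=e$ and $A_1\beta(a_1)=e$ in the neighbouring theorems) to place $E$ in $K$. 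Then all Theorem~\ref{thm1} generators lie in $K$, so $K=\mathcal{T}_7$.

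The main obstacle is precisely the geometric verification in Figures~\ref{SP} and~\ref{SDP}: one needs to check that $\sigma_1$ and $\sigma_2$ are honest reflections of $N_7$ with $D(\sigma_i)=1$ (so that they lie in $\mathcal{T}_7$ and are involutions), that they reverse orientation on annular neighbourhoods (so conjugation by a reflection composed with a twist behaves correctly), and — the delicate point — that suitable words in $T,\sigma_1,\sigma_2$ carry curves already known to be in $K$ onto $b_3$ and onto a curve whose twist, after one more multiplication by $A_1$, equals $E$. Everything else (the algebraic identities, the $T$-conjugation spreading the chain, and the final appeal to Theorem~\ref{thm1}) is mechanical and parallels the proofs already given for $g=8,10$ and $g=4k+3$.
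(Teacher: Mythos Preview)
Your overall template matches the paper's proof, but step~3 contains a counting error that leaves a genuine gap. The rotation $T$ has order $5$ and cyclically permutes only the first five crosscaps $\mathcal{C}_1,\ldots,\mathcal{C}_5$, so the $T$-orbit of $a_1$ consists of five curves: $a_1,b_1,c_1,b_2$, and a fifth ``wrap-around'' curve $x=T(b_2)$ encircling crosscaps $5$ and $1$. This curve is \emph{not} $c_2$, since $c_2$ encircles crosscaps $5$ and $6$ (one of the two special crosscaps on the $x$-axis, fixed by $T$). Hence conjugating $A_1$ by powers of $T$ yields only $B_1,C_1,B_2$ --- not $C_2$, and not $B_3$. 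You hedge on $B_3$ in step~4, but you never address $C_2$; already the fact that you extract six chain curves from a five-element orbit is a red flag.

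The paper fills this gap via $\sigma_2$ in a slightly less direct way than you anticipate. One keeps the auxiliary curve $x=T(b_2)$, sets $y=\sigma_2(x)$ (so $X,Y\in K$), and then verifies geometrically that $T^{-1}(y)=c_2$, giving $C_2\in K$; separately $\sigma_2(b_2)=b_3$ gives $B_3\in K$. Finally $A_1\sigma_1(a_1)=e$ gives $E\in K$, exactly as you guessed. So the architecture is indeed the same as for $g=8,10$ and $g=4k+3$, but the substantive content of this particular proof is precisely the part you labelled ``routine bookkeeping'': obtaining $C_2$ requires an honest detour through the auxiliary curve $x$ and the reflection $\sigma_2$, not $T$-conjugation alone.
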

\begin{proof}
Let $K$ be the subgroup of $\mathcal{T}_g$ generated by the set
\[
\lbrace \tau_1,\tau_2, \tau_1\tau_2\tau_1A_2,\tau_2A_1,\sigma_1,\sigma_2 \rbrace.
\] 
Note that the rotation $T=\tau_1\tau_2$ is contained in $K$.
It follows from 
\begin{itemize}
	\item $A_1=\tau_2(\tau_2A_1)$ and
	\item $A_2=(\tau_1\tau_2\tau_1)(\tau_1\tau_2\tau_1A_2)$
\end{itemize}
that the elements $A_1$ and $A_2$ are in $K$. By conjugating $A_1$ 
with powers of $T$, $\mathcal{T}_g$ contains the elements $B_1,C_1$ and $B_2$. \\
\noindent
Let $T(b_{2})=x$ and $\sigma_2(x)=y$. Then the elements $X$
and $Y$ are contained in $K$. It follows from 
\begin{itemize}
\item $T^{-1}(y)=c_{2}$,
\item $\sigma_2(b_{2})=b_{3}$
\end{itemize}
that $C_{2}$ and $B_{3}$ are contained in $K$. Moreover, since $A_1\sigma_1(a_1)=e$, $E\in K$, which completes the proof by Theorem~\ref{thm1}.
\end{proof}


\end{document}